\documentclass{amsart}

\newtheorem{theorem}{Theorem}[section]
\newtheorem{proposition}[theorem]{Proposition}
\newtheorem{lemma}[theorem]{Lemma}
\newtheorem{corollary}[theorem]{Corollary}

\theoremstyle{definition}
\newtheorem{definition}[theorem]{Definition}
\newtheorem{example}[theorem]{Example}

\theoremstyle{remark}
\newtheorem{remark}[theorem]{Remark}

\numberwithin{equation}{section}

\begin{document}

\title[Gaussian Processes with Sample Paths in RKBS]{Gaussian Processes with Sample Paths in Reproducing Kernel Banach Spaces}


\author[T. Karvonen]{Toni Karvonen}
\address{School of Engineering Sciences, Lappeenranta-Lahti University of Technology LUT, Yliopistonkatu 34, 53850 Lappeenranta, Finland}
\curraddr{}
\email{toni.karvonen@lut.fi}
\thanks{The first author was supported by the Research Council of Finland grants 359183 (“Flagship of Advanced Mathematics for Sensing, Imaging and Modelling”) and 368086 (“Inference and approximation under misspecification”)}

\author[R. K. H. Sørensen]{Rasmus K. H. Sørensen}
\address{Department of Applied Mathematics and Computer Science, Technical University of Denmark, Richard Petersens Plads Building 324, 2800 Kongens Lyngby, Denmark}
\curraddr{}
\email{rkhso@dtu.dk}
\thanks{The second author was supported by Danish Data Science Academy, which is funded by the Novo Nordisk Foundation (NNF21SA0069429) and VILLUM FONDEN (40516).}

\subjclass[2020]{Primary 60G15, 60G17; Secondary 28C20, 46E22, 47B32, 60B11.}

\date{}

\dedicatory{}

\begin{abstract}
    We investigate the connection between Gaussian processes and Gaussian random elements in reproducing kernel Banach spaces.
    We show that the covariance operator of a weak second-order Radon probability measure on such a space is uniquely determined by a positive definite function.
    In the Gaussian case, we characterize those positive definite functions that arise from covariance operators in terms of \( \gamma \)-radonifying operators.
    Building on these results, we extend the classical Driscoll theorem to the Banach space setting.
\end{abstract}

\maketitle

\section{Introduction}
\label{sec:introduction}

Let \( (\Omega,\mathcal{A},\mu) \) be a complete probability space and \( \xi : T \times \Omega \to \mathbb{R} \) a centered Gaussian process with covariance function \( K \).
For such a random process, questions concerning the regularity of its sample paths arise immediately.
A classical result asserts that the sample paths of \( \xi \) belong to the reproducing kernel Hilbert space \( H(K) \) of \( K \) with probability zero whenever \( H(K) \) is infinite-dimensional.
This result was originally stated without proof by Parzen \cite[eq.~(34)]{parzen1963}, and later proved by Kallianpur \cite{kallianpur1970a} and Le Page \cite{lepage1973}.

This motivated Driscoll \cite{driscoll1973} to ask whether there exists another reproducing kernel Hilbert space \( H(R) \) that contains the sample paths of \( \xi \) almost surely.
Under appropriate assumptions, Driscoll proved the zero--one law
\begin{equation} \label{eq:Driscoll:zero:one}
    \mu(\xi \in H(R)) = 0
    \qquad \mathrm{or} \qquad
    \mu(\xi \in H(R)) = 1,
\end{equation}
and established a necessary and sufficient condition for \( \mu(\xi \in H(R)) = 1 \) to hold.
The condition given by Driscoll is equivalent to requiring that \( H(K) \subseteq H(R) \) and that the operator \( L : H(R) \to H(R) \), defined by \( L R(\cdot,t) = K(\cdot,t) \) for all \( t \in T \), is nuclear.
Fortet \cite{fortet1974} subsequently introduced the terminology that \( R \) \emph{dominates} \( K \) if \( H(K) \subseteq H(R) \), and that \( R \) \emph{n-dominates} \( K \) if, in addition, \( L \) is nuclear.
In modern terminology, this is commonly referred to as \emph{nuclear dominance}.

The assumptions made by Driscoll in the original paper are that \( T \) is a separable metric space and that the sample paths of \( \xi \) are continuous with respect to the metric topology on \( T \).
Driscoll describes the latter condition as ``a rather restrictive assumption, even for Gaussian processes''~\cite[p.\@~309]{driscoll1973}.
Nevertheless, this assumption allows \( \xi \) to be regarded as a random element taking values in the space \( C(T) \) of continuous functions on \( T \), thereby addressing a fundamental measurability issue that arises for random processes and the cylindrical \( \sigma \)-algebra on \( \mathbb{R}^T \).

Lukić and Beder~\cite{lukic2001} later proved that none of Driscoll's additional assumptions are necessary.
The crux of their analysis is the canonical metric on \( T \) induced by the kernel \( R \), with respect to which functions in \( H(R) \) are continuous.
Under the nuclear dominance assumption, \( H(R) \) may be assumed to be separable, and hence \( T \) admits a countable dense subset.
Random elements in \( H(R) \) can therefore be defined by specifying them on such a subset and extending them uniquely by continuity.
This yields a natural candidate for a suitable modification of the original random process \( \xi \) taking values in \( H(R) \).
Thereby the measurability difficulties that motivated Driscoll's additional assumptions can be resolved by reducing the problem to this countable dense subset of \( T \), where these difficulties do not arise.

In this paper, we extend the analysis of Lukić and Beder~\cite{lukic2001} to the Banach space setting and generalize Driscoll's theorem to separable reproducing kernel Banach spaces.
Our main results are Theorems~\ref{thm:Driscoll:one} and~\ref{thm:Driscoll:zero}, which establish the ``one'' and ``zero'' parts of Driscoll's theorem, respectively.
These theorems are summarized in the following theorem using \( \gamma \)-radonifying operators~\cite{hytonen2017,neerven2010}.
The \( \gamma \)-radonifying operators can be thought of as operators that map ``white noise'' in a Hilbert space into a well-defined Gaussian random element in a Banach space (see Theorem~\ref{thm:gamma:radonifying}).

\begin{theorem} \label{thm:Driscoll}
    Let \( (\Omega,\mathcal{A},\mu) \) be a complete probability space, let \( X \) be a separable reproducing kernel Banach space, and let \( \xi : T \times \Omega \to \mathbb{R} \) be a centered Gaussian process with covariance function \( K \) and associated Hilbert space \( H(K) \).
    \begin{enumerate}
        \item If \( H(K) \subseteq X \) and the embedding \( \iota : H(K) \to X \) is \( \gamma \)-radonifying, then there exists a centered Gaussian random element \( \eta : \Omega \to X \) such that \( \xi_t = \eta_t \) almost surely for all \( t \in T \).

        \item If \( H(K) \not\subseteq X \) or if the embedding \( \iota : H(K) \to X \) is not \( \gamma \)-radonifying, then
        \begin{equation*}
            \mu(\{ \omega \in \Omega : \xi(\cdot,\omega) \in X \}) = 0.
        \end{equation*}
    \end{enumerate}
\end{theorem}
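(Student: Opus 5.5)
We will prove the two parts separately. Throughout, a reproducing kernel Banach space \( X \) is a Banach space of functions on \( T \) on which all point evaluations \( \delta_t : f \mapsto f(t) \) are continuous. Since \( X \) is separable, we will use two facts. First, the Borel and cylindrical \( \sigma \)-algebras on \( X \) coincide. Second, the point evaluations separate points of \( X \), so a countable family \( \{\delta_{t_n}\} \) already separates points of \( X \) and generates its Borel \( \sigma \)-algebra.

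For part (1), we use the \( \gamma \)-radonifying characterization (Theorem~\ref{thm:gamma:radonifying}). Choose an orthonormal basis \( (e_k) \) of \( H(K) \). This basis is countable: separability of \( H(K) \) follows from the \( \gamma \)-radonifying assumption, since \( \iota \) has separable range, is injective, and the relevant part of \( H(K) \) lives on the closed span of \( e_k \). Next, realize the process through the isometry \( H(K) \to L^2(\mu) \) given by \( K(\cdot,t) \mapsto \xi_t \). Let \( \gamma_k \) be the image of \( e_k \); these are i.i.d.\ standard Gaussians. Since \( \iota \) is \( \gamma \)-radonifying, the series \( \eta = \sum_k \gamma_k\, \iota e_k \) converges almost surely and in \( L^2(\Omega;X) \) to a centered Gaussian random element of \( X \). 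Applying the continuous functional \( \delta_t \) term by term gives \( \eta_t = \sum_k \gamma_k e_k(t) \). On the other hand, \( e_k(t) = \langle e_k, K(\cdot,t)\rangle_{H(K)} \), so \( \sum_k e_k(t) e_k \) is the basis expansion of \( K(\cdot,t) \). Its image under the isometry is therefore \( \xi_t \), so \( \eta_t = \xi_t \) in \( L^2(\mu) \), hence almost surely. Completeness of \( \mu \) lets us set \( \eta = 0 \) on the null set where the series diverges.

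For part (2), we argue by contraposition. Suppose \( \mu(\xi(\cdot,\omega)\in X) > 0 \). We first want a zero--one law that upgrades this to probability one, and then to show that \( H(K) \subseteq X \) with \( \iota \) \( \gamma \)-radonifying. Following Lukić and Beder, pick a countable set \( S = \{t_n\} \) such that \( \{\delta_{t_n}\} \) separates points of \( X \). Then restriction \( f \mapsto f|_S \) embeds \( X \) injectively into \( \mathbb{R}^S \), and by Lusin--Souslin its image \( X_S \) is a Borel subset of the Polish space \( \mathbb{R}^S \). The event \( \{\xi(\cdot,\omega)\in X\} \) is contained in \( \{\xi|_S \in X_S\} \), which is measurable. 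The set \( X_S \) is a Borel linear subspace, so the Gaussian zero--one law (Kallianpur's, or the one for measurable linear subspaces under a Gaussian measure on \( \mathbb{R}^S \)) gives \( \mu(\xi|_S\in X_S) = 1 \).

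On that event we then define \( \eta \) as the unique element of \( X \) extending \( \xi|_S \). This is a Borel random element of \( X \), and it is Gaussian because its law pulls back from a Gaussian law on \( \mathbb{R}^S \), so every functional \( \delta_t \) of it is Gaussian. The main obstacle is identifying \( \eta_t \) with \( \xi_t \) for \( t \notin S \). Here we use that \( \delta_t \) restricted to \( X_S \) is a Borel linear functional, hence measurable linear, hence an \( L^2 \)-limit of finite combinations of the \( \delta_{t_n} \) under the Gaussian law. We then show that the same combinations converge to \( \xi_t \). This works because on the event \( \xi\in X \) we have \( \xi_t = \eta_t \), and that event has positive probability; together with Gaussianity this forces the \( L^2 \)-difference \( \xi_t - \eta_t \) to vanish.

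With \( \eta \) identified, the covariance results earlier in the paper take over. The covariance operator of the Gaussian Radon measure of \( \eta \) is determined by the positive definite function \( K \). The paper's characterization of which positive definite functions arise from Gaussian covariance operators then gives that the Cameron--Martin space is \( H(K) \subseteq X \) and that \( \iota \) is \( \gamma \)-radonifying. This contradicts the hypothesis of part (2), so the probability must be zero.
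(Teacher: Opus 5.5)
Your proposal is correct. Part (1) takes a genuinely different route from the paper. Part (2) follows the paper's outline but supplies a step that the paper's own proof omits.

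\textbf{Part (1).} The paper obtains the Gaussian measure abstractly and transfers it to the process:
\begin{itemize}
\item it gets the centered Radon Gaussian measure with covariance function $K$ from Theorem~\ref{thm:covariance:function};
\item it transfers this measure to $\mu^\xi$ on the cylinder set $\pi_S^{-1}(X(S))$, where $S$ is a countable determining set joined with a $d_K$-dense set;
\item it builds $\eta$ as the extension of $\xi|_S$ via Lemma~\ref{lem:rkbs:measurability} and Theorem~\ref{thm:GP:restriction};
\item it gets $\eta_t=\xi_t$ a.s.\ off $S$ from $\|\xi_t-\eta_t\|_{L^2}\le 2d_K(s,t)$.
\end{itemize}
You instead construct $\eta$ directly as the Karhunen--Lo\`eve series $\sum_k \gamma_k\,\iota e_k$, where $\gamma_k$ are the Lo\`eve-isometry images of an orthonormal basis of $H(K)$. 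Theorem~\ref{thm:gamma:radonifying} gives convergence, Gaussianity is immediate, and $\eta_t=\xi_t$ a.s.\ follows by comparing the a.s.\ and $L^2$ limits of the same scalar series.

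Your route bypasses determining sets, Lusin--Souslin, rotation invariance and the $d_K$-density step. The paper's route, in exchange, realizes $\eta$ pathwise as the extension of $\xi|_S$ and reuses the machinery needed in part (2).

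Two cosmetic points. Separability of $H(K)$ is cleanest from injectivity of $\iota$ into the separable $X$: a countable separating family $f_n\in X^*$ yields the countable total family $\iota^*f_n$. Also, completeness is not needed here, since the convergence set of the series is measurable.

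\textbf{Part (2).} Your identification of $\eta_t$ with $\xi_t$ for $t\notin S$ is exactly what is needed, and the paper does not provide it. The paper asserts that the lifted $\eta$ has covariance function $K$, but this is only known on $S\times S$. Its reduction to showing $\mu(\Omega_0)=0$ can in fact fail.

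For example, take $T=[0,1]$, $X=C[0,1]$ and $S=\mathbb{Q}\cap[0,1]$. Let $B$ be a Brownian motion with continuous paths and $Z_t$ i.i.d.\ standard normals independent of $B$. Set $\xi_t=B_t$ for rational $t$ and $\xi_t=B_t+Z_t$ for irrational $t$. Then $H(K)\not\subseteq X$, yet $\mu(\Omega_0)=1$ and $\eta=B$, which does not have covariance $K$.

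Your argument handles this correctly:
\begin{itemize}
\item $\eta_t$ is a measurable linear functional of $\xi|_S$, hence lies in the $L^2$-closed span of $\{\xi_s: s\in S\}$ and is jointly Gaussian with $\xi_t$;
\item the centered Gaussian $\xi_t-\eta_t$ vanishes on a set of positive outer measure, so it vanishes a.s.
\end{itemize}

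Two small repairs are needed.
\begin{itemize}
\item Read your hypothesis $\mu(\xi\in X)>0$ as positive outer measure. The argument goes through verbatim, since the measurable sets $\{\xi|_S\in X_S\}$ and $\{\xi_t=\eta_t\}$ both contain $\{\xi\in X\}$.
\item Gaussianity of each $\delta_t(\eta)$ alone does not make $\eta$ a Gaussian random element of $X$. Apply your measurable-linear-functional argument to an arbitrary $f\in X^*$: the map sending $y\in X_S$ to $f$ of its unique extension in $X$ is a Borel linear functional on $X_S$. Alternatively, invoke Theorem~\ref{thm:GP:restriction}.
\end{itemize}
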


\begin{remark} \label{rem:measurability}
    Due to subtle measurability issues discussed in Section~\ref{sec:Driscoll:theorem} we purposely avoid statements such as ``\( \xi \) has sample paths almost surely in \( X \)''.
\end{remark}

Similar to Lukić and Beder~\cite{lukic2001}, a cornerstone of our analysis is the connection between random elements in reproducing kernel Banach spaces and the associated stochastic processes.
We show that the covariance operator of a (weak) second-order Radon probability measure on such a space is uniquely determined by a positive definite function.
In particular, we characterize those positive definite functions that arise as covariance functions of Radon Gaussian measures.
This result replaces the Prokhorov theorem used in Driscoll's original paper~\cite[Theorem~2]{driscoll1973}.
Moreover, our analysis provides a perspective on kernel-based methods that is more natural when considering pushforwards under measurable mappings between Banach spaces.

Several authors have established \( \gamma \)-radonifying embeddings of reproducing kernel Hilbert spaces associated with Gaussian processes into classical function spaces.
A fundamental result due to Brzeźniak \cite[Theorem~3.6]{brzezniak1996} shows that the indefinite integration operator is \( \gamma \)-radonifying as an operator from \( L^2(0,1) \) into the fractional Sobolev space \( W^{\alpha,p}(0,1) \) for \( p \in (2,\infty) \) and \( \alpha \in \left(1/p, 1/2\right) \).
This yields fractional Sobolev regularity, and hence Hölder regularity, of Brownian motion.
The results in \cite{brzezniak1996} are formulated in terms of almost sure convergence of Gaussian series rather than in the language of \( \gamma \)-radonifying operators.
Further details, refinements, and results on Besov regularity can be found in \cite{brzezniak1996} and the references therein.

\section{Preliminaries}
\label{sec:preliminaries}

Here we recall some basic definitions and properties of random elements in locally convex spaces and their distributions.
Further details and proofs can be found in the classical texts \cite{bogachev1998,vakhania1987}.
Throughout this section, \( X \) denotes a locally convex space, \( X^* \) its topological dual, and \( (\Omega,\mathcal{A}) \) a measurable space.
We write \( \langle \cdot, \cdot \rangle \) for the dual pairing between \( X \) and \( X^* \) defined by \( \langle x ,f \rangle = f(x) \) for \( x \in X \) and \( f \in X^* \).

The \( \sigma \)-algebra generated by a family \( \Gamma \subseteq X^* \) is called the \emph{cylindrical \( \sigma \)-algebra with respect to \( \Gamma \)} and will be denoted by \( \mathcal{C}(X,\Gamma) \).
It is the smallest \( \sigma \)-algebra on \( X \) with respect to which every \( f \in \Gamma \) is measurable.
In the special case \( \Gamma = X^* \), we write \( \mathcal{C}(X) \).
The Borel \( \sigma \)-algebra on \( X \) will be denoted by \( \mathcal{B}(X) \).
In some important cases \( \mathcal{B}(X) \) is strictly larger than \( \mathcal{C}(X,\Gamma) \).
However, if \( X \) is a separable Fréchet space and \( \Gamma \) separates points the \( \sigma \)-algebras coincide.

An essential analytical tool in the study of probability measures on locally convex spaces is the characteristic functional, also called the Fourier transform.

\begin{definition} \label{def:characteristic:functional}
    Let \( \mu \) be a probability measure on \( \mathcal{C}(X,\Gamma) \).
    The \emph{characteristic functional} of \( \mu \) is the functional
    \begin{equation} \label{eq:characteristic:functional}
        \widehat{\mu}:\Gamma\to\mathbb{C}, \qquad
        f \mapsto \int_X e^{if(x)} \, \mu(dx).
    \end{equation}
\end{definition}

By the Lebesgue dominated convergence theorem, \( \widehat{\mu} \) is sequentially continuous on \( \Gamma \) endowed with the weak-\( * \) topology.
In general, however, continuity of \( \widehat{\mu} \) requires a stronger topology on \( \Gamma \), such as the topology of uniform convergence on compact subsets of \( X \), or the Mackey topology if \( X \) is quasi-complete.

An important property of the characteristic functional is that if two probability measures \( \mu_1 \) and \( \mu_2 \) on \( \mathcal{C}(X,\Gamma) \) satisfy \( \widehat{\mu}_1 = \widehat{\mu}_2 \), then \( \mu_1 = \mu_2 \).

\begin{definition} \label{def:gaussian:measure}
    A probability measure \( \gamma \) on \( (X,\mathcal{C}(X)) \) is called a \emph{Gaussian measure} if the characteristic functional of \( \gamma \) has the form
    \begin{equation} \label{eq:gaussian:characteristic:functional}
        \widehat{\gamma}(f) =
        \exp\left(ia(f)-\frac{1}{2}R(f,f)\right),
        \qquad \forall f\in X^*,
    \end{equation}
    where \( a : X^* \to \mathbb{R} \) is a linear functional and \( R : X^* \times X^* \to \mathbb{R} \) is a symmetric, positive bilinear form.
\end{definition}

It is well known that a probability measure \( \gamma \) on \( (X,\mathcal{C}(X)) \) is Gaussian if and only if, for every \( f \in X^* \), the pushforward \( \gamma \circ f^{-1} \) is a Gaussian measure on \( (\mathbb{R}, \mathcal{B}(\mathbb{R})) \).
Indeed, many authors take this property as the definition of a Gaussian measure.

\begin{definition} \label{def:random:element}
    Let \( \mu \) be a probability measure on \( (\Omega,\mathcal{A}) \).
    A \( (\mathcal{A}, \mathcal{C}(X)) \)-measurable map \( \xi : \Omega \to X \) is called a \emph{random element in \( X \)}.
    It is called \emph{Gaussian} if the pushforward measure \( \gamma = \mu \circ \xi^{-1} \) is Gaussian on \( (X,\mathcal{C}(X)) \).
\end{definition}

\begin{remark}
    Some authors require a random element \( \xi : \Omega \to X \) to be \emph{strongly measurable}, meaning that \( \xi \) is the pointwise limit of a sequence of simple functions.
    This distinction becomes important when considering vector-valued integrals.
    For separable Banach spaces, the Pettis measurability theorem \cite{pettis1938} ensures that the two definitions coincide.
    For further details on vector-valued integration, see \cite{diestel1977}.
\end{remark}

For \( 1 \leq p < \infty \), let \( L^p(\mu) \) denote the Lebesgue space of \( p \)-integrable functions with respect to the measure \( \mu \).
A probability measure \( \mu \) is said to be of \emph{weak \( p \)-order} if \( X^* \subseteq L^p(\mu) \).
For probability measures of weak order \( p \geq 2 \), the mean and covariance can be defined as follows.

\begin{definition} \label{def:mean}
    Let \( \mu \) be a measure on \( (X,\mathcal{C}(X)) \) such that \( X^*\subseteq L^1(\mu) \).
    The \emph{mean} of \( \mu \) is the linear functional
    \begin{equation} \label{eq:mean}
        a_\mu:X^*\to\mathbb{R},\qquad f\mapsto\int_X f(x)\mu(dx).
    \end{equation}
\end{definition}

\begin{definition} \label{def:covariance}
    Let \( \mu \) be a measure on \( (X, \mathcal{C}(X)) \) such that \( X^* \subseteq L^2(\mu) \).
    The \emph{covariance} of \( \mu \) is the bilinear form
    \begin{equation} \label{eq:covariance}
        R_\mu : X^* \times X^* \to \mathbb{R}, \qquad
        (f,g) \mapsto \int_X (f(x) - a_\mu(f))(g(x) - a_\mu(g)) \, \mu(dx).
    \end{equation}
\end{definition}

It follows from Fernique's theorem that every Gaussian measure is of weak order \( p \) for all \( p \in [1,\infty) \).
For a Radon Gaussian measure, the mean can be represented by an element of \( X \) and the covariance by a continuous linear operator \( R : X^* \to X \).
We recall that a measure \( \mu \) on \( (X, \mathcal{B}(X)) \) is called a \emph{Radon measure} if, for every \( B \in \mathcal{B}(X) \) and \( \varepsilon > 0 \), there exists a compact set \( K \subseteq B \) such that \( \mu(B \setminus K) < \varepsilon \).

\begin{definition} \label{def:Radon:Gaussian:measure}
    A Radon measure \( \gamma \) on \( (X, \mathcal{B}(X)) \) is called a \emph{Radon Gaussian measure} if the restriction of \( \gamma \) to \( (X, \mathcal{C}(X)) \) is a Gaussian measure.
\end{definition}

\begin{theorem} \label{thm:Gaussian:moments}
    Let \( \gamma \) be a Radon Gaussian measure on \( (X,\mathcal{B}(X)) \).
    Then
    \begin{enumerate}
        \item There exists a unique element \( a \in X \) such that
        \begin{equation*}
            f(a) = \int_X f(x) \, \gamma(dx), \qquad \forall f \in X^*.
        \end{equation*}

        \item There exists a unique linear operator \( R : X^* \to X \) such that
        \begin{equation*}
            \langle Rf,g \rangle =
            \int_X (f(x)-a_\gamma(f))(g(x)-a_\gamma(g)) \, \gamma(dx)
            \qquad \forall f, g \in X^*
        \end{equation*}
    \end{enumerate}
\end{theorem}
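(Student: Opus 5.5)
The proof follows the classical route of Bogachev and Vakhania--Tarieladze--Chobanyan: first realize the mean and covariance as elements of the algebraic dual \( (X^*)' \), then use the Radon property to show they lie in \( X \). Uniqueness in both parts is immediate, because \( X^* \) separates the points of the locally convex (Hausdorff) space \( X \) by Hahn--Banach. For existence, recall from Fernique's theorem that \( \gamma \) is of weak order \( p \) for every \( p<\infty \). Hence \( a_\gamma(f)=\int f\,d\gamma \) and \( g\mapsto R_\gamma(f,g) \) are well-defined linear functionals on \( X^* \). The task is to show that each of them is given by evaluation at a point of \( X \).

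The main tool is the following criterion. Let \( \nu \) be a finite Radon measure and \( h\in L^1(\gamma) \) (or \( L^2 \)). Then the functional \( g\mapsto \int_X g(x)h(x)\,\gamma(dx) \) on \( X^* \) is represented by an element of \( X \) as soon as it is continuous for the Mackey topology \( \tau(X^*,X) \), or more directly as soon as it is continuous on \( X^* \) for the topology of uniform convergence on compact sets (convex hulls permitting). Concretely, given \( \varepsilon>0 \), choose a compact \( K_\varepsilon \) with \( \gamma(X\setminus K_\varepsilon)<\varepsilon \). Then
\[
  \Bigl|\int_X g h\,d\gamma\Bigr|\le \sup_{K_\varepsilon}|g|\,\|h\|_{L^1}+\Bigl(\int_{X\setminus K_\varepsilon}g^2\,d\gamma\Bigr)^{1/2}\|h\|_{L^2(X\setminus K_\varepsilon)} .
\]
The first piece is controlled by a compact set. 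For the second, Gaussianity is used: for centered one-dimensional Gaussian \( g \), \( \int g^2\,d\gamma \) is comparable to the square of a quantile of \( |g| \). Since \( \gamma(K_{1/4})>1/2 \), we get \( \|g\|_{L^2(\gamma)}\le C\sup_{K_{1/4}}|g| \) plus the mean. So every \( f\in X^* \) satisfies \( \|f\|_{L^2(\gamma)}\lesssim \sup_{K}|f| \) for a fixed compact \( K \), and the embedding \( X^*\to L^2(\gamma) \) is continuous for the compact-open topology.

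It follows that the functionals \( g\mapsto a_\gamma(g) \) and \( g\mapsto R_\gamma(f,g) \) are continuous on \( X^* \) for the topology of uniform convergence on a compact set. By the Mackey--Arens theorem such a functional is an element of the completion of \( X \), more precisely evaluation at a point of the closed absolutely convex hull of a compact set.

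The main obstacle is to land in \( X \) itself rather than in its completion, since \( X \) is not assumed quasi-complete. Here I would use the Radon property directly. Approximate the vector integral \( \int_X x\,h(x)\,\gamma(dx) \) by integrals over \( K_n \), the compacts exhausting the measure. On a compact set the restricted measure is a Radon measure on a compact space, and the barycenter of \( h\,\gamma|_{K_n} \) exists in \( X \) because it lies in the closed convex hull of \( K_n \) scaled. If that hull is not compact in \( X \), then following Bogachev's Theorem 3.2.3 I would instead realize \( R_\gamma f \) as the limit of \( \int_{K_n} x (f(x)-a(f))\,d\gamma \). This limit is a Cauchy net converging in the weak topology in the completion, and we conclude using the fact that a Radon Gaussian measure is concentrated on a countable union of metrizable compacts, whose closed absolutely convex hulls are compact in \( X \). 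I expect this passage from the completion back to \( X \) to be the delicate point. Linearity of \( R \) then follows from bilinearity of the covariance together with uniqueness, and likewise \( a\in X \).
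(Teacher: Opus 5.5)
The paper gives no proof of this statement; it is quoted from Bogachev and Vakhania--Tarieladze--Chobanyan, so your argument has to stand on its own. The first half of it does.

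Uniqueness is immediate from Hahn--Banach. Your quantile estimate is also correct: if \( \gamma(K)>1/2 \), then \( |a_\gamma(g)|\le\sup_K|g| \) and the standard deviation of \( g \) is at most \( C\sup_K|g| \). Hence \( |\int_X g\,h\,d\gamma|\le C\|h\|_{L^2(\gamma)}\sup_K|g| \) for all \( g\in X^* \). Duality then places \( a_\gamma \) and \( R_\gamma f \) in a multiple of the closed absolutely convex hull of \( K \), taken in the completion \( \widehat{X} \).

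When \( X \) is complete, in particular for the Banach spaces used in the rest of the paper, this already proves the theorem. It is a different route from the usual Bochner-integral argument via Fernique. Note, however, that your earlier sentence claiming that compact-open continuity yields an element of \( X \) is only true with \( \widehat{X} \) in place of \( X \).

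The genuine gap is the passage from \( \widehat{X} \) back to \( X \) for a general locally convex \( X \). This is exactly what the statement asserts, and neither device in your last paragraph achieves it. The barycentre of \( h\gamma|_{K_n} \) is a priori only a point of \( \widehat{X} \), so the integrals over \( K_n \) and their limit again land only in \( \widehat{X} \).

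Your claimed fact about metrizable compacts is also false in general. Their closed absolutely convex hulls need not be compact in \( X \): in \( c_{00} \) with the \( \ell^2 \)-norm, \( K=\{0\}\cup\{e_n/n:n\ge1\} \) is a metrizable compact. Its absolutely convex hull contains \( x_N=\sum_{n\le N}2^{-n}e_n/n \), which converges in \( \ell^2 \) to a point outside \( c_{00} \), so the closed hull in \( c_{00} \) is not compact. Good compacts do exist a posteriori for Radon Gaussian measures, e.g.\ balls of a compactly embedded Banach space of full measure. But the standard constructions of such sets already use that \( R_\gamma \) maps \( X^* \) into \( X \), i.e.\ the statement under proof, and you give no independent argument.

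What is missing is a step combining Gaussianity with the fact that \( \gamma \) is carried by a \( \sigma \)-compact set \( \Sigma\subseteq X \). One way to supply it:

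For the covariance, let \( \nu \) be the image of \( \gamma\otimes\gamma \) under \( (x,y)\mapsto(x-y)/\sqrt{2} \). This is a centered Radon Gaussian measure on \( X \) with the same covariance, carried by a \( \sigma \)-compact \( \Sigma'\subseteq X \). Let \( h=R_\gamma f\in\widehat{X} \) and \( \sigma^2=\langle R_\gamma f,f\rangle \); if \( \sigma^2=0 \), then \( h=0 \) by Cauchy--Schwarz. Otherwise the map \( x\mapsto(f(x),\,x-f(x)h/\sigma^2) \) sends \( \nu \) to a product \( N(0,\sigma^2)\otimes\rho \) on \( \mathbb{R}\times\widehat{X} \), because the components are uncorrelated and jointly Gaussian. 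Fubini applied to \( \{(t,z):z+th/\sigma^2\in\Sigma'\} \) gives \( z \) and \( t_1\neq t_2 \) with \( z+t_ih/\sigma^2\in X \), whence \( h\in X \). Alternatively, Cameron--Martin on \( \widehat{X} \) gives \( \nu_h\sim\nu \), which forces \( \Sigma'\cap(\Sigma'+h)\neq\emptyset \).

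For the mean, \( \nu \) coincides on \( \widehat{X} \) with the law of \( x-a_\gamma \) under \( \gamma \). So \( \nu(\Sigma-a_\gamma)=1=\nu(\Sigma') \), and therefore \( a_\gamma\in\Sigma-\Sigma'\subseteq X \).
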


\begin{remark}
    It follows from Ulam's theorem that a Borel probability measure \( \mu \) on a complete metric space \( X \) is a Radon measure if and only if there exists a separable subset \( Y \subseteq X \) such that \( \mu(Y) = 1 \).
\end{remark}

The concept of a Gaussian measure is closely related to that of Gaussian random processes.
Let \( T \) be a non-empty set, and let \( \mathbb{R}^T \) be the space of real-valued functions on \( T \), endowed with the topology of pointwise convergence.

\begin{definition} \label{def:random:process}
    A \emph{random process} is a function \( \xi: T \times \Omega \to \mathbb{R} \) such that, for every \( t \in T \), the function \( \xi_t: \Omega \to \mathbb{R},\; \omega\mapsto \xi(t, \omega) \) is \( \mathcal{A} \)-measurable.
\end{definition}

A straightforward calculation shows that \( \xi : T \times \Omega \to \mathbb{R} \) is a random process if and only if the map \( \omega \mapsto \xi(\cdot,\omega) \) is \( (\mathcal{A}, \mathcal{C}(\mathbb{R}^T)) \)-measurable.
Hence a random process can equivalently be regarded as a random element in \( (\mathbb{R}^T, \mathcal{C}(\mathbb{R}^T)) \).
For \( \omega \in \Omega \), the realization \( \xi(\cdot,\omega) \) is called a \emph{sample path}, and \( \mathbb{R}^T \) is called the \emph{path space}.

\begin{definition} \label{def:gaussian:process}
    Let \( \mu \) be a probability measure on \( (\Omega,\mathcal{A}) \).
    A random process \( \xi: T \times \Omega \to \mathbb{R} \) is called a \emph{Gaussian process} if the induced measure \( \mu^\xi \) on \( (\mathbb{R}^T,\mathcal{C}(\mathbb{R}^T)) \) is Gaussian.
\end{definition}

For \( t \in T \), let \( \delta_t:\mathbb{R}^T\to\mathbb{R}, \, x \mapsto x(t) \) be the evaluation functional at \( t \).
Using the characterization \( (\mathbb{R}^T)^* = \operatorname{span}\{ \delta_t : t \in T \} \), it follows that a random process \( \xi: T \times \Omega \to \mathbb{R} \) is Gaussian if and only if \( (\xi_t)_{t \in T_0} \) is a Gaussian random element for every finite subset \( T_0 \subseteq T \).

\begin{proposition} \label{prop:Gaussian:process:moments}
    Let \( \gamma \) be a Gaussian measure on \( (\mathbb{R}^T,\mathcal{C}(\mathbb{R}^T)) \).
    \begin{enumerate}
        \item There exists a unique function \( a : T \to \mathbb{R} \) such that
        \begin{equation*}
            a_\gamma(f) = f(a), \qquad \forall f \in (\mathbb{R}^T)^*.
        \end{equation*}
        \item There exists a unique function \( K : T \times T \to \mathbb{R} \) such that
        \begin{equation*}
            R_\gamma(\delta_s,\delta_t) = K(s,t), \qquad \forall s,t \in T.
        \end{equation*}
    \end{enumerate}
\end{proposition}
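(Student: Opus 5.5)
The plan is to define \( a \) and \( K \) directly through evaluation functionals and then use linearity and bilinearity to extend to all of \( (\mathbb{R}^T)^* \). First I must check that \( a_\gamma \) and \( R_\gamma \) are well defined, i.e.\ that \( (\mathbb{R}^T)^* \subseteq L^2(\gamma) \). This follows from Gaussianity. For each \( f \in (\mathbb{R}^T)^* \), the pushforward \( \gamma \circ f^{-1} \) is a Gaussian measure on \( \mathbb{R} \), with characteristic function \( s \mapsto \exp(i s a(f) - \tfrac12 s^2 R(f,f)) \) obtained from \eqref{eq:gaussian:characteristic:functional} by applying it to \( sf \). A one-dimensional Gaussian has moments of all orders, so \( f \in L^2(\gamma) \). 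One can also see that \( a_\gamma(f) = a(f) \) and \( R_\gamma(f,f) = R(f,f) \), but this is not needed.

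\emph{Part (1).} Define \( a(t) := a_\gamma(\delta_t) = \int_{\mathbb{R}^T} x(t)\,\gamma(dx) \). The map \( a_\gamma \) is linear on \( (\mathbb{R}^T)^* \) by linearity of the integral. Every \( f \in (\mathbb{R}^T)^* \) has the form \( f = \sum_{j=1}^n c_j \delta_{t_j} \) by the characterization \( (\mathbb{R}^T)^* = \operatorname{span}\{\delta_t\} \). Hence
\[
a_\gamma(f) = \sum_j c_j a(t_j) = \sum_j c_j \delta_{t_j}(a) = f(a).
\]
This computation involves no ambiguity, because \( a_\gamma(f) \) is defined intrinsically as an integral, so the choice of representation of \( f \) plays no role. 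For uniqueness, suppose \( a' \) also works. Taking \( f = \delta_t \) gives \( a'(t) = a_\gamma(\delta_t) = a(t) \) for all \( t \).

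\emph{Part (2).} Set \( K(s,t) := R_\gamma(\delta_s,\delta_t) \). This is a well-defined real number by the \( L^2 \) property above and the Cauchy--Schwarz inequality. Since \( R_\gamma(\delta_s,\delta_t) \) is a concrete number, existence is immediate, and uniqueness is trivial because the displayed identity determines \( K \) pointwise. As a remark, bilinearity of \( R_\gamma \) then shows that
\[
R_\gamma\Bigl(\sum_i b_i\delta_{s_i}, \sum_j c_j\delta_{t_j}\Bigr) = \sum_{i,j} b_i c_j K(s_i,t_j),
\]
so \( K \) determines \( R_\gamma \). By the positivity of \( R_\gamma \), this also shows that \( K \) is symmetric and positive semidefinite.

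The main (mild) obstacle is the integrability step: justifying that the mean and covariance exist for a Gaussian measure on the cylindrical \( \sigma \)-algebra without invoking Radon-ness. I would handle it by reducing to one-dimensional marginals, whose characteristic functions are Gaussian, rather than by appealing to Fernique's theorem in infinite dimensions. Everything else is linear algebra over the algebraic span of the evaluation functionals.
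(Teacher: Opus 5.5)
Your proof is correct and follows the reasoning the paper relies on. The paper states this proposition without proof; it rests it on weak second moments of Gaussian measures and on the identity \( (\mathbb{R}^T)^* = \operatorname{span}\{ \delta_t : t \in T \} \), exactly as you do. The only difference is how you get \( (\mathbb{R}^T)^* \subseteq L^2(\gamma) \): you use the one-dimensional Gaussian marginals \( \gamma \circ f^{-1} \), whereas the paper appeals to Fernique's theorem. Your route is more elementary and fully sufficient, since only weak moments are needed.
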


Because \( (\mathbb{R}^T)^* = \operatorname{span}\{ \delta_t : t \in T \} \) and \( R_\gamma \) is bilinear, it follows that \( R_\gamma \) is uniquely determined by the function \( K \) in Proposition~\ref{prop:Gaussian:process:moments}.
Accordingly, \( K \) is called the \emph{covariance function} of \( \gamma \).
It is readily verified that \( K \) is positive definite.

\begin{definition} \label{def:positive:definite}
    A function \( K: T \times T \to \mathbb{R} \) is called \emph{positive definite} if, for every finite subset \( T_0 \subseteq T \) and every collection of scalars \( (c_t)_{t \in T_0} \subset \mathbb{R} \), it holds that
    \begin{equation} \label{eq:positive:definite}
        \sum_{s,t \in T_0} c_s c_t K(s, t) \geq 0.
    \end{equation}
\end{definition}

It is well known that a positive definite function \( K : T \times T \to \mathbb{R} \) is automatically symmetric, in the sense that \( K(s,t) = K(t,s) \) for all \( s,t \in T \).
Accordingly, when referring to a positive definite function, we omit the qualifier “symmetric.”

A remarkable property of Gaussian processes is that every positive definite function determines a unique (centered) Gaussian process.
This follows immediately from the celebrated Kolmogorov extension theorem.
For Gaussian measures on a general locally convex space \( X \), however, describing the bilinear forms that arise as Gaussian covariances is much more delicate.

\section{Covariance Operators}
\label{sec:covariance:operators}

Throughout this section, \( X \) denotes a Banach space with norm \( \|\cdot\| \) and \( H \) denotes a Hilbert space with inner product \( (\cdot,\cdot) \).
Their dual spaces \( X^* \) and \( H^* \) are endowed with the respective norm topologies unless otherwise specified.
We write \( \mathcal{L}(H,X) \) for the space of bounded linear operators from \( H \) to \( X \), endowed with the strong operator topology.
When there is a risk of confusion, we add a subscript to specify the underlying space; for example, \( \|\cdot\|_{X} \) denotes the norm on \( X \).

\begin{definition} \label{def:symmetric:positive}
    Let \( R : X^* \to X \) be an operator.
    \begin{enumerate}
        \item \( R \) is \emph{symmetric} if \( \langle Rf, g \rangle = \langle Rg, f \rangle \) for all \( f,g \in X^* \) .
        \item \( R \) is \emph{positive} if \( \langle Rf, f \rangle \geq 0 \) for all \( f \in X^* \).
    \end{enumerate}
\end{definition}

It is readily verified that every covariance operator is symmetric and positive.
Therefore we begin by recalling some of their basic properties. Proofs of the
results cited can be found in \cite[Chapter~III]{vakhania1987}.

\begin{proposition} \label{prop:symmetric}
    Let \( R : X^* \to X \) be a symmetric positive operator.
    Then \( R \) is linear and continuous.
\end{proposition}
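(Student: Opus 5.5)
Linearity will come from symmetry alone, and continuity from the closed graph theorem, with symmetry supplying closedness of the graph; positivity is not needed for either step.

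For linearity, fix \( f_1, f_2 \in X^* \) and scalars \( \alpha, \beta \), and put \( h = \alpha f_1 + \beta f_2 \). For every \( g \in X^* \), symmetry gives
\begin{equation*}
    \langle R h, g \rangle = \langle R g, h \rangle
    = \alpha \langle R g, f_1 \rangle + \beta \langle R g, f_2 \rangle
    = \alpha \langle R f_1, g \rangle + \beta \langle R f_2, g \rangle
    = \langle \alpha R f_1 + \beta R f_2, g \rangle ,
\end{equation*}
where the middle equality holds because \( h \) is a linear functional evaluated at the element \( Rg \in X \). By Hahn--Banach, \( X^* \) separates the points of \( X \), so \( R h = \alpha R f_1 + \beta R f_2 \).

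For continuity, \( X^* \) and \( X \) are Banach spaces, so it suffices to show that \( R \) has closed graph. Suppose \( f_n \to f \) in \( X^* \) and \( R f_n \to x \) in \( X \). For any \( g \in X^* \), the right-hand side of
\begin{equation*}
    \langle R f_n, g \rangle = \langle R g, f_n \rangle
\end{equation*}
converges to \( \langle R g, f \rangle = \langle R f, g \rangle \), since \( |\langle Rg, f_n - f\rangle| \le \|Rg\| \, \|f_n - f\| \). The left-hand side converges to \( \langle x, g \rangle \) because \( g \) is continuous on \( X \). Hence \( \langle x, g\rangle = \langle Rf, g\rangle \) for all \( g \in X^* \), and separation of points gives \( x = Rf \). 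The closed graph theorem then yields that \( R \) is bounded.

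I do not expect a genuine obstacle. The only point needing care is to use the norm topology on \( X^* \), which the paper fixes at the start of this section, so that the closed graph theorem applies between Banach spaces. As an alternative to the closed graph theorem, one could apply the uniform boundedness principle to the family of functionals \( g \mapsto \langle Rf, g\rangle \) indexed by \( \|f\| \le 1 \).
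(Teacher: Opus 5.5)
Your proof is correct: symmetry together with the Hahn--Banach separation of points gives linearity, the same symmetry identity shows the graph is closed (indeed that \( R \) is weak-\( * \)-to-weak continuous) so the closed graph theorem applies, and you are right that positivity plays no role. The paper does not prove this proposition itself but defers to \cite[Chapter~III]{vakhania1987}, and your argument is essentially the standard one for this fact.
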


Because every symmetric operator is automatically continuous and linear, we will omit these qualifiers when referring to symmetric operators.

\begin{lemma} \label{lem:cauchy:schwarz}
    Let \( R : X^* \to X \) be a symmetric positive operator.
    Then \( R \) satisfies the \emph{Cauchy--Schwarz inequality}
    \begin{equation} \label{eq:cauchy:schwarz}
        |\langle Rf, g \rangle| \leq \sqrt{\langle Rf,f \rangle}\sqrt{\langle Rg,g \rangle},\qquad \forall f,g \in X^*.
    \end{equation}
\end{lemma}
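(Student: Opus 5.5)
The plan is to run the classical argument for positive semidefinite bilinear forms. Define \( B : X^* \times X^* \to \mathbb{R} \) by \( B(f,g) = \langle Rf, g \rangle \). By Proposition~\ref{prop:symmetric}, \( R \) is linear, so \( B \) is bilinear. Symmetry of \( R \) makes \( B \) symmetric, and positivity of \( R \) gives \( B(f,f) \geq 0 \) for all \( f \). The Cauchy--Schwarz inequality~\eqref{eq:cauchy:schwarz} then follows from the standard quadratic-polynomial argument, with no topology needed.

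Fix \( f, g \in X^* \) and, for \( \lambda \in \mathbb{R} \), consider
\begin{equation*}
    p(\lambda) = \langle R(f + \lambda g), f + \lambda g \rangle = \langle Rf,f \rangle + 2\lambda \langle Rf, g \rangle + \lambda^2 \langle Rg, g \rangle \geq 0.
\end{equation*}
Expanding this uses linearity of \( R \) and of the pairing, and combines the two cross terms \( \langle Rf,g\rangle \) and \( \langle Rg,f\rangle \) using symmetry.

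We then split into two cases.
\begin{enumerate}
    \item If \( \langle Rg,g\rangle > 0 \), then \( p \) is a nonnegative quadratic polynomial in \( \lambda \). Its discriminant must be nonpositive, so \( 4\langle Rf,g\rangle^2 \leq 4\langle Rf,f\rangle\langle Rg,g\rangle \). Taking square roots gives the claim.
    \item If \( \langle Rg,g\rangle = 0 \), then \( p \) is affine in \( \lambda \) and nonnegative on all of \( \mathbb{R} \). This forces its slope \( 2\langle Rf,g\rangle \) to vanish, so both sides of~\eqref{eq:cauchy:schwarz} are zero.
\end{enumerate}

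There is no real obstacle here. The only points needing care are that linearity of \( R \) is supplied by Proposition~\ref{prop:symmetric} rather than assumed, and that the degenerate case \( \langle Rg,g\rangle = 0 \) is handled separately instead of dividing by \( \langle Rg,g\rangle \).
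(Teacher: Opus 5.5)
Your proof is correct: it is the standard discriminant argument for a symmetric positive semidefinite bilinear form, and you handle the degenerate case \( \langle Rg,g\rangle = 0 \) properly. The paper gives no proof of its own and simply defers to \cite[Chapter~III]{vakhania1987}, which rests on the same standard fact; note that bilinearity of \( (f,g)\mapsto\langle Rf,g\rangle \) already follows from symmetry alone, so citing Proposition~\ref{prop:symmetric} is convenient but not essential.
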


From the Cauchy--Schwarz inequality it follows that every symmetric positive operator \( R : X^* \to X \) naturally induces an inner product on its range \( R(X^*) \) with
\begin{equation} \label{eq:cameron:martin:inner}
    (Rf, Rg) = \langle Rf, g \rangle, \qquad f,g \in X^*.
\end{equation}
In general, \( R(X^*) \) is not complete with respect to the norm induced by the inner product.
However, using that \( X \) is (sequentially) complete, one obtains a Hilbert space completion of \( R(X^*) \) as a subspace of \( X \).

\begin{proposition} \label{prop:cameron:martin}
    Let \( R : X^* \to X \) be a symmetric positive operator.
    Then there exists a unique Hilbert space \( H \subseteq X \) such that
    \begin{enumerate}
        \item The embedding \( \iota: H \to X \) is continuous;
        \item \( R(X^*) \) is a dense subspace of \( H \);
        \item \( (Rf,Rg) = \langle Rf, g \rangle \) for all \( f,g\in X^* \).
    \end{enumerate}
\end{proposition}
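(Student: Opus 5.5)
We construct \( H \) as the completion of \( R(X^*) \), realized inside \( X \). Positivity and Lemma~\ref{lem:cauchy:schwarz} show that \( (Rf,Rg) := \langle Rf, g\rangle \) is a well-defined inner product on \( R(X^*) \). It is well defined because if \( Rf = Rf' \), then \( \langle Rf,g\rangle = \langle Rg,f\rangle \) and symmetry gives independence of the representative. It is definite because \( \langle Rf,f\rangle = 0 \) forces \( |\langle Rf,g\rangle| = 0 \) for all \( g \), hence \( Rf=0 \) by Hahn--Banach. Write \( \|Rf\|_H = \langle Rf,f\rangle^{1/2} \).

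The key estimate is that the inclusion \( R(X^*) \to X \) is bounded. For \( g \in X^* \) with \( \|g\|\le 1 \), Lemma~\ref{lem:cauchy:schwarz} and Proposition~\ref{prop:symmetric} give
\begin{equation*}
|\langle Rf,g\rangle| \le \langle Rf,f\rangle^{1/2}\langle Rg,g\rangle^{1/2} \le \|R\|^{1/2}\,\|Rf\|_H .
\end{equation*}
Taking the supremum over such \( g \) yields \( \|Rf\|_X \le \|R\|^{1/2}\|Rf\|_H \).

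We then define \( H \) as the set of \( x\in X \) that are \( X \)-limits of sequences \( (Rf_n) \) which are Cauchy in \( \|\cdot\|_H \). The bound above makes such sequences Cauchy in \( X \), and completeness of \( X \) provides the limit. To transfer the inner product, we must show that the \( X \)-limit determines the \( H \)-limit, i.e.\ that the abstract completion \( \overline{R(X^*)} \) injects into \( X \). This is the main obstacle. Suppose \( (Rf_n) \) is \( H \)-Cauchy and \( Rf_n\to 0 \) in \( X \). For fixed \( m \),
\begin{equation*}
(Rf_n,Rf_m) = \langle Rf_n, f_m\rangle \to 0 \quad\text{as } n\to\infty .
\end{equation*}
Then \( \|Rf_n\|_H^2 = (Rf_n,Rf_n-Rf_m) + (Rf_n,Rf_m) \). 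The first term is small, uniformly in large \( n \), once \( m \) is chosen large, since \( H \)-Cauchy sequences are bounded; the second term then tends to zero as \( n\to\infty \). Hence \( \|Rf_n\|_H\to 0 \). So the continuous extension \( j:\overline{R(X^*)}\to X \) of the inclusion is injective, and we set \( H=j(\overline{R(X^*)}) \) with the transported inner product. Properties (1)--(3) then follow by construction: the norm bound extends by continuity, \( R(X^*) \) is dense, and the inner product identity holds on \( R(X^*) \).

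For uniqueness, let \( H' \subseteq X \) be another Hilbert space satisfying (1)--(3). Then \( H \) and \( H' \) both contain \( R(X^*) \) as a dense subspace with the same inner product. The identity on \( R(X^*) \) therefore extends to an isometric isomorphism \( U:H\to H' \). The two maps \( \iota'\circ U \) and \( \iota \) are continuous into \( X \) and agree on the dense subspace \( R(X^*) \), so they agree everywhere. Hence \( H=H' \) as sets with equal inner products.
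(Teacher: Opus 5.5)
Your proof is correct and follows the route the paper indicates: the paper gives no proof of its own, citing Vakhania--Tarieladze--Chobanyan, Chapter~III, and remarking only that one completes \( R(X^*) \) inside \( X \) using completeness of \( X \). You do exactly this via the Cauchy--Schwarz bound \( \|Rf\|_X \le \|R\|^{1/2}\|Rf\|_H \), extension to the completion, injectivity of the extension, and uniqueness from density plus continuity of the embeddings. Your sequential injectivity argument is the identity \( (h,Rf)_H = \langle jh, f\rangle \), extended from \( R(X^*) \) by continuity and combined with density of \( R(X^*) \), written out by hand.
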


\begin{remark} \label{rem:cameron:martin}
    For every Radon Gaussian measure \( \gamma \) on \( (X,\mathcal{B}(X)) \) with covariance operator \( R : X^* \to X \), the Hilbert space \( H \) coincides with the Cameron--Martin space of \( \gamma \).
    For further details on Cameron--Martin spaces, see \cite{bogachev1998}.
\end{remark}

The Hilbert space \( H \) in Proposition~\ref{prop:cameron:martin} is called the \emph{Hilbert space associated with \( R \)} and will be denoted by \( H(R) \).
Because \( R(X^*) \) is a dense subspace of \( H(R) \) and the embedding \( \iota : H(R) \to X \) is continuous, it follows from \eqref{eq:cameron:martin:inner} that the inner product on \( H(R) \) satisfies the \emph{reproducing property}
\begin{equation} \label{eq:reproducing:property}
    (x, Rf) = f(x), \qquad x \in H(R),\ f \in X^*.
\end{equation}

The following factorization lemma shows that \( R \) is uniquely determined by \( H(R) \).
For an operator \( A \in \mathcal{L}(H,X) \) we denote by \( A^{*} \) its adjoint, and we identify \( H \) with its dual \( H^{*} \) via the Riesz representation theorem.

\begin{lemma}[Factorization] \label{lem:factorization}
    Let \( R : X^* \to X \) be a symmetric positive operator.
    Then \( R = \iota \iota^* \) where \( \iota : H(R) \to X \) is the embedding.
\end{lemma}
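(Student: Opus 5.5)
To prove \( R = \iota\iota^* \), I will identify \( \iota^* : X^* \to H(R) \) explicitly (using the Riesz identification \( H(R)^* \cong H(R) \)) and show that \( \iota^* f = Rf \) for every \( f \in X^* \). Applying \( \iota \) then gives \( \iota\iota^* f = \iota(Rf) = Rf \), since \( \iota \) is simply the inclusion and \( Rf \in R(X^*) \subseteq H(R) \) by Proposition~\ref{prop:cameron:martin}.

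The adjoint is well defined because \( \iota \) is continuous by Proposition~\ref{prop:cameron:martin}. Concretely, \( \iota^* : X^* \to H(R) \) is characterized as the unique map satisfying
\begin{equation*}
    (x, \iota^* f) = \langle \iota x, f \rangle = f(x), \qquad x \in H(R),\ f \in X^*.
\end{equation*}
Here the functional \( x \mapsto f(\iota x) \) is bounded on \( H(R) \), so its Riesz representer exists and is unique.

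The reproducing property \eqref{eq:reproducing:property} says exactly that \( (x, Rf) = f(x) \) for all \( x \in H(R) \). Thus \( Rf \in H(R) \) is also a representer of the same bounded functional, and uniqueness in the Riesz representation theorem forces \( \iota^* f = Rf \). This completes the argument.

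The only substantive point is that the reproducing property holds on all of \( H(R) \), not merely on the dense subspace \( R(X^*) \). The text preceding the lemma asserts this, but I would verify it briefly. For \( x = Rg \), the identity \( (Rg, Rf) = \langle Rg, f \rangle = f(Rg) \) follows from item~(3) of Proposition~\ref{prop:cameron:martin}. For a general \( x \in H(R) \), choose \( Rg_n \to x \) in \( H(R) \). The left side \( (Rg_n, Rf) \) converges to \( (x, Rf) \) by continuity of the inner product. The right side \( f(\iota Rg_n) \) converges to \( f(\iota x) \) because \( \iota \) and \( f \) are both continuous. Hence \( (x, Rf) = f(x) \) on all of \( H(R) \), and the proof is complete.
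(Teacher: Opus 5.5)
Your proof is correct and follows the same route as the paper: the reproducing property \( (x, Rf) = f(x) \) together with uniqueness in the Riesz representation theorem identifies \( \iota^* f \) with \( Rf \), giving \( R = \iota\iota^* \). Your density argument extending the reproducing property from \( R(X^*) \) to all of \( H(R) \) spells out the justification the paper gives in the text just before the lemma.
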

\begin{proof}
    By the reproducing property \eqref{eq:reproducing:property}, it follows that
    \begin{equation*}
        (x, Rf) = \langle \iota x, f \rangle_X = \langle x, \iota^* f \rangle_H, \qquad \forall x \in H(R), f \in X^*.
    \end{equation*}
    Because this holds for all \( x \in H(R) \), the Riesz representation theorem implies that \( \iota^* f \in H(R)^* \) can be identified with \( Rf \) for all \( f \in X^* \).
    Therefore, \( R = \iota \iota^* \).
\end{proof}

It is clear that the converse of Lemma~\ref{lem:factorization} also holds:
if \( H \) is a Hilbert space and \( A \in \mathcal{L}(H,X) \), then \( R = A A^{*} \) is a symmetric positive operator.
In this case, \( H \) is isometrically isomorphic to \( H(R) \), and the factorization is unique in this sense.

\begin{corollary} \label{cor:spectral:decomposition}
    Let \( R : X^* \to X \) be a symmetric positive operator and suppose that \( H(R) \) is separable with an orthonormal basis \( (h_k) \).
    Then
    \begin{equation} \label{eq:spectral:decomposition}
        R f = \sum_{k=1}^\infty \langle h_k , f \rangle h_k, \qquad \forall f \in X^*.
    \end{equation}
\end{corollary}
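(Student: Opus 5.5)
The plan is to combine the factorization $R = \iota\iota^*$ from Lemma~\ref{lem:factorization} with the expansion of $\iota^* f$ in the orthonormal basis $(h_k)$ of $H(R)$. Once $\iota^*$ has been identified with $R$ viewed as a map into $H(R)$, the series in \eqref{eq:spectral:decomposition} is simply the image under $\iota$ of a convergent orthonormal expansion.

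Fix $f \in X^*$. By Lemma~\ref{lem:factorization}, $\iota^* f \in H(R)$ is identified with $Rf$ via the Riesz representation. Since $H(R)$ is separable with orthonormal basis $(h_k)$, Parseval's expansion gives
\begin{equation*}
    \iota^* f = \sum_{k=1}^\infty (\iota^* f, h_k)\, h_k ,
\end{equation*}
with convergence in the norm of $H(R)$. To identify the coefficients, I will use the reproducing property \eqref{eq:reproducing:property} with $x = h_k$:
\begin{equation*}
    (\iota^* f, h_k) = (h_k, Rf) = f(h_k) = \langle h_k, f\rangle .
\end{equation*}
Symmetry of the inner product is used here, and it is harmless because the inner product is real.

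Next I apply $\iota$. The embedding $\iota : H(R) \to X$ is continuous by Proposition~\ref{prop:cameron:martin}, so it maps the norm-convergent series in $H(R)$ to a series converging in the norm of $X$. Therefore
\begin{equation*}
    Rf = \iota\iota^* f = \sum_{k=1}^\infty \langle h_k, f\rangle\, \iota h_k = \sum_{k=1}^\infty \langle h_k, f\rangle\, h_k ,
\end{equation*}
with convergence in $X$.

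The only point needing care is the justification of convergence in $X$ rather than merely in $H(R)$, and continuity of $\iota$ supplies it. The ordering of the basis is immaterial, since orthonormal expansions converge unconditionally in $H(R)$ and this is preserved by $\iota$. Thus I do not expect a genuine obstacle.
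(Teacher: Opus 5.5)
Your proof is correct and follows essentially the route the paper intends. The paper states this corollary without proof as an immediate consequence of the factorization $R = \iota\iota^*$ (Lemma~\ref{lem:factorization}): expand $\iota^* f = Rf$ in $(h_k)$, identify the coefficients as $\langle h_k, f\rangle$ via the reproducing property \eqref{eq:reproducing:property}, and transfer norm convergence from $H(R)$ to $X$ using the continuity of $\iota$.
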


For a Radon Gaussian measure \( \gamma \) on \( (X,\mathcal{B}(X)) \) with covariance operator \( R \), it is well known that \( H(R) \) is separable.
Hence \( R \) admits the representation \eqref{eq:spectral:decomposition}.
This representation naturally suggests decomposing \( \gamma \) as a series expansion in the orthonormal basis \( (x_k) \), with independent standard Gaussian coefficients.

Throughout this section \( (\Omega, \mathcal{A}, \mu) \) is a fixed probability space supporting a countable sequence \( (\xi_k) \) of independent standard Gaussian random variables.
Such a sequence is called a \emph{Gaussian sequence}.

\begin{theorem} \label{thm:gaussian:series}
    Let \( \gamma \) be a centered Radon Gaussian measure on \( (X,\mathcal{B}(X)) \) with covariance operator \( R \) and \( (h_k) \) an orthonormal basis for \( H(R) \).
    Then the series
    \begin{equation} \label{eq:gaussian:series}
        \sum_{k=1}^{\infty} \xi_k h_k
    \end{equation}
    converges almost surely in \( X \).
    The distribution of its sum is \( \gamma \).
\end{theorem}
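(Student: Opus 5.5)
The plan is to first identify the law of the partial sums $S_n = \sum_{k=1}^n \xi_k h_k$. Then we show that the series converges in law to $\gamma$. Finally, we upgrade convergence in law to almost sure convergence using the It\^o--Nisio theorem, since the summands $\xi_k h_k$ are independent and symmetric random elements of $X$.

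\textbf{Step 1: the functionals of the series.} For $f \in X^*$, the real series $\sum_k \xi_k \langle h_k, f\rangle$ has independent centered Gaussian terms with variances $\langle h_k,f\rangle^2$. By the reproducing property \eqref{eq:reproducing:property}, $\langle h_k, f\rangle = (h_k, Rf)$. Parseval in $H(R)$ then gives
\[
\sum_k \langle h_k,f\rangle^2 = \|Rf\|_{H(R)}^2 = \langle Rf,f\rangle .
\]
Hence $\langle S_n, f\rangle$ converges almost surely and in $L^2(\mu)$ to a centered Gaussian variable with variance $\langle Rf,f\rangle$. Its characteristic function is therefore $\widehat{\gamma}(f)$.

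\textbf{Step 2: reduction to a separable setting.} Since $\gamma$ is Radon, it is concentrated on a separable closed subspace $X_0 \subseteq X$ (Ulam, as in the Remark). The mean $0$ and the covariance range $R(X^*)$ lie in $X_0$; for $R(X^*)$ this can be seen from the Bochner/Pettis representation $Rf = \int_X x f(x)\,\gamma(dx)$. Consequently $H(R) \subseteq X_0$, because $H(R)$ is the $H$-closure of $R(X^*)$ and the embedding into $X$ is continuous. So every $h_k \in X_0$, and we may work in the separable Banach space $X_0$. There $\mathcal{B}(X_0) = \mathcal{C}(X_0)$, and the restriction of $\gamma$ is a Borel probability measure on $X_0$.

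\textbf{Step 3: It\^o--Nisio.} The It\^o--Nisio theorem states the following. For independent symmetric $X_0$-valued random elements $Y_k$, if there is an $X_0$-valued random element $S$ with $\langle S_n, f\rangle \to \langle S, f\rangle$ in probability for every $f$ in $X_0^*$, then $S_n \to S$ almost surely. Equivalently, in the form we use, it suffices that the characteristic functionals $\widehat{\mu_{S_n}}(f)$ converge to $\widehat{\nu}(f)$ for a Radon probability measure $\nu$.

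By Step 1 we have
\[
\mathbb{E}\, e^{i\langle S_n,f\rangle} \to \exp\!\left(-\tfrac12\langle Rf,f\rangle\right) = \widehat{\gamma}(f)
\]
for all $f\in X^*$, and these $f$ restrict to all of $X_0^*$ by Hahn--Banach. Taking $\nu = \gamma|_{X_0}$, the It\^o--Nisio theorem yields that $S_n$ converges almost surely in $X_0$, hence in $X$, to some $S$. Since $\langle S,f\rangle$ is the almost sure limit of $\langle S_n,f\rangle$, its law has characteristic functional $\widehat{\gamma}$. Uniqueness of measures with given characteristic functional on $\mathcal{C}(X_0)=\mathcal{B}(X_0)$ then gives $\mathrm{law}(S)=\gamma$.

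The main obstacle is Step 3. It requires invoking the full strength of It\^o--Nisio, namely the passage from weak convergence of the characteristic functionals to almost sure norm convergence, whose proof needs tightness arguments via L\'evy's inequality. Step 2, the separability reduction, is a secondary technical point; it is what makes the identification of cylindrical and Borel laws legitimate.
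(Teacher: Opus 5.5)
Your proposal is correct and follows essentially the same route as the paper. You compute the characteristic functionals of the partial sums $S_n$ and use Parseval in $H(R)$ (equivalently $R_n \to R$) to get convergence to $\widehat{\gamma}$. You then invoke the It\^o--Nisio theorem to upgrade this to almost sure convergence with limit law $\gamma$. Your Step~2 reduces to a separable closed subspace $X_0$ with $\gamma(X_0)=1$ and $H(R)\subseteq X_0$, which makes explicit the separability hypothesis that It\^o--Nisio needs; the paper only addresses this by remarking that $S_n$ is separably valued.
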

\begin{proof}
    For \( n \in \mathbb{N} \), let \( S_n = \sum_{k=1}^{n} \xi_k h_k \) and \( R_n = \sum_{k=1}^{n} \langle h_k, \cdot \rangle h_k \) be the \( n \)th partial sums.
    Then \( S_n \) is a separably valued random element in \( X \) with distribution \( \gamma_n \).
    Independence of \( (\xi_k) \) gives
    \begin{equation*}
        \widehat{\gamma}_n(f)
        = \prod_{k=1}^{n}\exp{\left( -\frac{1}{2}\langle h_k, f \rangle^2 \right)}
        = \exp{\left( -\frac{1}{2}\langle R_nf, f \rangle \right)}
        \qquad \forall f \in X^*.
    \end{equation*}
    Hence \( S_n \) is a Gaussian random element.
    Because \( R_n \to R \) strongly in \( H(R) \) by the series representation \eqref{eq:spectral:decomposition}, we obtain
    \begin{equation*}
        \lim_{n\to\infty}\widehat{\gamma}_n(f) = \exp\left(-\frac{1}{2}\langle R f, f \rangle \right) = \widehat{\gamma}(f), \qquad \forall f \in X^*.
    \end{equation*}
    From the Itô--Nisio theorem (see \cite[Theorem~5.2.4]{vakhania1987}) it follows that \( S_n \) converges almost surely in \( X \).
    The limit \( S = \lim_{n\to\infty}S_n \) has distribution \( \gamma \).
\end{proof}

\begin{remark}
    For Gaussian series, almost sure convergence and convergence in the Lebesgue--Bochner space \( L^{2}(\Omega;X) \) coincide (see, for example, \cite[Corollary~6.4.4]{hytonen2017}).
    In what follows, we formulate results in terms of almost sure convergence, with equivalent formulations available in \( L^{2}(\Omega;X) \).
\end{remark}

A series of the form \eqref{eq:gaussian:series} is refered to as a \emph{Gaussian series} and is treated in detail in \cite{bogachev1998,hytonen2017,vakhania1987}.
Gaussian series are important for characterizing covariance operators of Radon Gaussian measures.
We present one such characterization in terms of the associated Hilbert space, formulated using \( \gamma \)-radonifying operators.
A detailed treatment of \( \gamma \)-radonifying operators can be found in \cite{hytonen2017,neerven2010}.

For the next definition, we introduce the notation \( \mathcal{F}(H) \) for the collection of all finite orthonormal systems in \( H \).

\begin{definition} \label{def:gamma:summing}
    An operator \( A \in \mathcal{L}(H,X) \) is \emph{\( \gamma \)-summing} if
    \begin{equation} \label{eq:gamma:summing}
        \sup_{(h_k)_{k=1}^n \in \mathcal{F}(H)} \int_\Omega \left\| \sum_{k=1}^n \xi_k(\omega) A h_k \right\|^2 \mu(d\omega) < \infty.
    \end{equation}
\end{definition}

\begin{remark}
    By the Kahane--Khintchine inequality, the exponent \( p = 2 \) in \eqref{eq:gamma:summing} may be replaced by any \( p \in [1,\infty) \).
    The choice \( p = 2 \) is standard in the literature.
\end{remark}

The collection of \( \gamma \)-summing operators will be denoted by \( \gamma_\infty(H,X) \).
It is readily verified that every \( A \in \gamma_\infty(H,X) \) is bounded by considering orthonormal systems consisting of a single element.
Moreover, \( \gamma_\infty(H,X) \) can be equipped with a norm for which it becomes a Banach space that is continuously embedded into \( \mathcal{L}(H,X) \).

\begin{lemma} \label{lem:gamma:summing}
    The space \( \gamma_\infty(H,X) \) is a Banach space with respect to the norm
    \begin{equation} \label{eq:gamma:summing:norm}
        \| A \|_{\gamma_\infty(H,X)} =  \sup_{(h_k)_{k=1}^n \in \mathcal{F}(H)} \left( \int_\Omega \left\| \sum_{k=1}^n \xi_k(\omega) A h_k \right\|^2 \mu(d\omega) \right)^\frac{1}{2}.
    \end{equation}
\end{lemma}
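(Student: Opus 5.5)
We will prove Lemma~\ref{lem:gamma:summing} in three steps: first that \eqref{eq:gamma:summing:norm} is a norm dominating the operator norm, then that Cauchy sequences converge in operator norm, and finally that this limit is also the limit in the \( \gamma \)-summing norm.

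For a fixed finite orthonormal system \( (h_k)_{k=1}^n \), the map
\[ A \mapsto \Bigl( \int_\Omega \bigl\| \sum_{k=1}^n \xi_k A h_k \bigr\|^2 \, d\mu \Bigr)^{1/2} \]
is a seminorm on \( \gamma_\infty(H,X) \). It is the \( L^2(\Omega;X) \)-norm of the linear image \( \sum_k \xi_k A h_k \), so Minkowski's inequality gives the triangle inequality, and it is clearly absolutely homogeneous. A pointwise supremum of seminorms is a seminorm, so \eqref{eq:gamma:summing:norm} is a seminorm; it is finite precisely on \( \gamma_\infty(H,X) \), which is therefore a vector space. For a single unit vector \( h \), the system \( (h) \) gives \( \int \|\xi_1 A h\|^2 \, d\mu = \|Ah\|^2 \), since \( \mathbb{E}\xi_1^2 = 1 \). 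Hence
\[ \|A\|_{\mathcal{L}(H,X)} \le \|A\|_{\gamma_\infty(H,X)}, \]
which yields definiteness and the continuous embedding into \( \mathcal{L}(H,X) \) with the operator-norm topology, and a fortiori with the strong operator topology.

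For completeness, take a Cauchy sequence \( (A_m) \) in \( \gamma_\infty(H,X) \). By the inequality above it is Cauchy in operator norm, so it converges in norm to some \( A \in \mathcal{L}(H,X) \), since \( \mathcal{L}(H,X) \) is complete. It remains to show \( A \in \gamma_\infty(H,X) \) and \( \|A_m - A\|_{\gamma_\infty} \to 0 \). Fix \( \varepsilon > 0 \) and \( N \) such that \( \|A_m - A_l\|_{\gamma_\infty} < \varepsilon \) for \( m,l \ge N \). Fix a finite orthonormal system \( (h_k)_{k=1}^n \). Because the sum is finite, \( \sum_k \xi_k (A_m - A_l) h_k \to \sum_k \xi_k (A_m - A) h_k \) pointwise on \( \Omega \) as \( l \to \infty \). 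Fatou's lemma then gives
\[ \int_\Omega \bigl\| \sum_{k=1}^n \xi_k (A_m - A) h_k \bigr\|^2 \, d\mu \le \liminf_{l\to\infty} \int_\Omega \bigl\| \sum_{k=1}^n \xi_k (A_m - A_l) h_k \bigr\|^2 \, d\mu \le \varepsilon^2 . \]
One could instead use dominated convergence, with the dominating function \( C \sum_k |\xi_k|^2 \) coming from operator-norm boundedness.

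Taking the supremum over all finite orthonormal systems gives \( \|A_m - A\|_{\gamma_\infty} \le \varepsilon \) for \( m \ge N \). In particular \( A_m - A \in \gamma_\infty(H,X) \), so \( A = A_m - (A_m - A) \in \gamma_\infty(H,X) \), and \( A_m \to A \) in the \( \gamma_\infty \)-norm.

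The main point needing care is this last interchange of limit and integral. It is where working with finite systems matters: each estimate involves only finitely many Gaussian variables, so the pointwise convergence and the limit passage are elementary, and the bound is uniform over all systems before the supremum is taken. Measurability of the finite sums is not an issue, since each is a finite combination of real random variables times fixed vectors.
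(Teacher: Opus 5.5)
Your proof is correct: the \( \gamma_\infty \)-norm dominates the operator norm, so a Cauchy sequence has an operator-norm limit, and Fatou's lemma on each fixed finite orthonormal system gives a bound uniform over all systems, which transfers the Cauchy estimate to that limit. The paper gives no proof of its own and defers to the cited literature \cite{neerven2010,hytonen2017}, where the standard argument for this fact is essentially the one you propose.
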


For finite-rank operators the \( \gamma_\infty \)-norm admits an explicit expression.
Recall that an \( A \in \mathcal{L}(H,X) \) has \emph{finite rank} if its range \( A(H) \) is finite-dimensional, or equivalently, if there exists an orthonormal system \( (h_k)_{k=1}^n \subset H \) such that
\begin{equation} \label{eq:finite:rank}
    A h = \sum_{k=1}^n (h, h_k)\, A h_k, \qquad \forall h \in H.
\end{equation}

\begin{lemma} \label{lem:finite:rank}
    Suppose that \( A \in \mathcal{L}(H,X) \) is a finite-rank operator and let \( (h_k)_{k=1}^n \) be an orthonormal system in \( H \) such that \eqref{eq:finite:rank} holds.
    Then \( A \in \gamma_\infty(H,X) \) and
    \begin{equation} \label{eq:finite:rank:gamma:norm}
        \| A \|_{\gamma_\infty(H,X)}^2 = \int_\Omega \left\| \sum_{k=1}^n \xi_k(\omega) A h_k \right\|^2 \mu(d\omega).
    \end{equation}
\end{lemma}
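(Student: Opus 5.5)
The plan is to show that any finite orthonormal system \( (g_j)_{j=1}^m \in \mathcal{F}(H) \) produces a Gaussian sum whose second moment is bounded by that of \( \sum_{k=1}^n \xi_k A h_k \). The reverse inequality is immediate, since \( (h_k)_{k=1}^n \) is itself an admissible system. Together these give both \( A \in \gamma_\infty(H,X) \) and the formula \eqref{eq:finite:rank:gamma:norm}.

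First, use \eqref{eq:finite:rank} to write
\begin{equation*}
    A g_j = \sum_{k=1}^n (g_j,h_k)\, A h_k .
\end{equation*}
Then
\begin{equation*}
    \sum_{j=1}^m \xi_j A g_j = \sum_{k=1}^n \eta_k A h_k,
    \qquad \eta_k := \sum_{j=1}^m (g_j,h_k)\,\xi_j .
\end{equation*}
Let \( M = ((g_j,h_k))_{j,k} \) be the \( m\times n \) matrix of coefficients. The vector \( \eta = M^{\top}\xi \) is centered Gaussian in \( \mathbb{R}^n \) with covariance \( Q = M^{\top}M \), whose entries are
\begin{equation*}
    Q_{kl} = \sum_j (h_k,g_j)(g_j,h_l) = (P h_k, h_l),
\end{equation*}
where \( P \) is the orthogonal projection onto \( \operatorname{span}\{g_j\} \). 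Hence \( 0 \le Q \le I \) in the sense of positive semidefinite matrices, because \( (Pc,c) \le \|c\|^2 \) for \( c = \sum c_k h_k \).

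The key step is a comparison: if \( \eta \) has covariance \( Q \le I \), then \( \mathbb{E}\|\sum \eta_k x_k\|^2 \le \mathbb{E}\|\sum \xi_k x_k\|^2 \) for any \( x_k \in X \). To see this, take \( \zeta \) independent of \( \eta \) and centered Gaussian with covariance \( I-Q \), so that \( \eta+\zeta \) is standard Gaussian in \( \mathbb{R}^n \). Put \( F(v) = \|\sum v_k x_k\|^2 \), which is convex. Then
\begin{equation*}
    \mathbb{E}\,F(\eta) = \mathbb{E}\,F\bigl(\mathbb{E}[\eta+\zeta \mid \eta]\bigr) \le \mathbb{E}\,F(\eta+\zeta)
\end{equation*}
by the conditional Jensen inequality, using \( \mathbb{E}[\zeta\mid\eta]=0 \). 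Since \( \eta+\zeta \) has the same law as \( (\xi_1,\dots,\xi_n) \), the right-hand side equals the integral in \eqref{eq:finite:rank:gamma:norm}. This step carries the whole argument.

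Taking the supremum over \( (g_j) \) then gives the claimed equality, and in particular finiteness. The remaining points are routine: the random variables are integrable because the sums lie in finite-dimensional spaces, and the formula does not depend on which \( \xi_k \) are used, since only their joint law matters.
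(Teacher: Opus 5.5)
Your proof is correct and complete. The paper itself gives no proof of this lemma: it is quoted from the cited literature on \( \gamma \)-radonifying operators, so there is no in-paper argument to compare yours against.

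Your route is a covariance-domination argument. The inequality \( Q \le I \) is Bessel's inequality \( \sum_j (v,g_j)^2 \le \|v\|^2 \) for \( v = \sum_k c_k h_k \). In operator terms it says \( \sum_j \langle A g_j, x^* \rangle^2 \le \sum_k \langle A h_k, x^* \rangle^2 \) for all \( x^* \in X^* \). Your conditional-Jensen step, with an independent \( \zeta \) of covariance \( I - Q \), then proves the resulting comparison of second moments directly.

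Two other routes would also work. One is to appeal to a general Gaussian comparison principle, such as one derived from Anderson's inequality. The other is to extend both \( (g_j) \) and \( (h_k) \) to orthonormal bases of \( \operatorname{span}\{g_1,\dots,g_m,h_1,\dots,h_n\} \). One then uses rotation invariance of the standard Gaussian, together with \( A = 0 \) on \( \{h_1,\dots,h_n\}^\perp \).

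The advantage of your version is that it needs nothing beyond finite-dimensional Gaussian vectors and Jensen's inequality. Constructing \( \zeta \) may require passing to a product probability space. As you note, this is harmless, since both sides depend only on the laws of \( \eta \) and \( (\xi_1,\dots,\xi_n) \).
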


The closure of the subspace of finite rank operators in \( \gamma_\infty(H,X) \) will be denoted by \( \gamma(H,X) \), and operators in \( \gamma(H,X) \)  are called \emph{\( \gamma \)-radonifying}.
The two spaces coincide when \( X \) does not contain a closed subspace isomorphic to \( c_0 \) (the Banach space of null sequences in \( \mathbb{R} \)).
In general, however, \( \gamma(H,X) \) is a proper subspace.

It is immediate that \( \gamma \)-radonifying operators can be approximated in \( \gamma(H,X) \) by finite rank operators.
Since \( \gamma(H,X) \) is continuously embedded into \( \mathcal{L}(H,X) \), it follows that every \( \gamma \)-radonifying operator is compact and supported on a separable subspace.
Moreover, \( \gamma \)-radonifying operators admit the following characterization.

\begin{theorem} \label{thm:gamma:radonifying}
    Suppose that \( H \) is separable with an orthonormal basis \( (h_k) \).
    Then an operator \( A \in \mathcal{L}(H,X) \) is \( \gamma \)-radonifying if and only if the series
    \begin{equation} \label{eq:gamma:radonifying}
        \sum_{k=1}^\infty \xi_k A h_k
    \end{equation}
    converges almost surely in \( X \).
\end{theorem}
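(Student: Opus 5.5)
We prove the two implications separately. Throughout, write \( S_n = \sum_{k=1}^n \xi_k A h_k \). Independence of the \( \xi_k \) and the fact that \( (h_k)_{k=1}^n \) is an orthonormal system give, by Lemma~\ref{lem:finite:rank},
\[
\int_\Omega \|S_m - S_n\|^2 \, d\mu = \|A P_{m,n}\|_{\gamma_\infty(H,X)}^2 ,
\]
where \( P_{m,n} \) is the orthogonal projection onto \( \operatorname{span}\{h_{n+1},\dots,h_m\} \). This identity is the bridge between the operator norm on \( \gamma(H,X) \) and the random series.

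\emph{\(\Rightarrow\).} Suppose \( A \in \gamma(H,X) \), and let \( P_n \) be the orthogonal projection onto \( \operatorname{span}\{h_1,\dots,h_n\} \).
\begin{enumerate}
\item The aim is \( \|A - AP_n\|_{\gamma_\infty} \to 0 \). Given \( \varepsilon > 0 \), pick a finite-rank \( F \) with \( \|A - F\|_{\gamma_\infty} < \varepsilon \).
\item The \( \gamma_\infty \)-norm satisfies the right ideal property \( \|BP\|_{\gamma_\infty} \le \|B\|_{\gamma_\infty} \) for any orthogonal projection \( P \). This holds because \( P \) maps finite orthonormal systems to images that, after Gram--Schmidt, give Gaussian sums with the same law up to a contraction; the standard tool is the covariance domination (Anderson) inequality, and I would cite it from \cite{hytonen2017}.
\item Hence \( \|A - AP_n\| \le \|A - F\| + \|F - FP_n\| + \|(F - A)P_n\| \le 2\varepsilon + \|F - FP_n\| \).
\item Since \( F \) has finite-dimensional range and \( FP_n \to F \) in operator norm (the relevant finitely many vectors \( F^* e_j \) are approximated by their projections), we get \( \|F - FP_n\|_{\gamma_\infty} \to 0 \). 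This uses that on the finite-dimensional range all norms are equivalent and the \( \gamma \)-norm is controlled by a Hilbert--Schmidt norm.
\end{enumerate}
It follows that \( \|A P_{m,n}\|_{\gamma_\infty} = \|(AP_m - AP_n)\|_{\gamma_\infty} \to 0 \), so \( (S_n) \) is Cauchy in \( L^2(\Omega;X) \) and converges in probability. The summands \( \xi_k A h_k \) are independent, symmetric and separably valued, so the Itô--Nisio theorem (or Lévy's theorem) upgrades convergence in probability to almost sure convergence.

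\emph{\(\Leftarrow\).} Suppose the series converges almost surely to \( S \).
\begin{enumerate}
\item By the remark following Theorem~\ref{thm:gaussian:series}, almost sure convergence of a Gaussian series implies convergence in \( L^2(\Omega;X) \). Hence \( \|A(P_m - P_n)\|_{\gamma_\infty} \to 0 \) by the bridge identity, provided the identity holds with the \( \gamma_\infty \)-norm. For finite-rank \( AP_{m,n} \) supported on \( \operatorname{span}\{h_{n+1},\dots,h_m\} \), this is exactly Lemma~\ref{lem:finite:rank}.
\item So \( (AP_n) \) is Cauchy in \( \gamma_\infty(H,X) \). By completeness (Lemma~\ref{lem:gamma:summing}) it converges to some \( B \in \gamma(H,X) \), since \( \gamma(H,X) \) is closed and each \( AP_n \) has finite rank.
\item The continuous embedding into \( \mathcal{L}(H,X) \) gives \( AP_n h \to Bh \). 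But also \( AP_n h \to Ah \) by boundedness of \( A \), so \( A = B \).
\end{enumerate}

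The main obstacle is the right ideal property in step 2 of the forward direction, equivalently the claim that \( \|AP_n - A\|_{\gamma_\infty} \to 0 \) for any \( A \in \gamma(H,X) \). This requires comparing Gaussian sums over different orthonormal systems, for which I would invoke the covariance domination inequality rather than prove it from scratch.
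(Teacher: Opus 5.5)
Your proof is correct. The paper gives no proof of this theorem; it is quoted as background from \cite{hytonen2017,neerven2010}. Your argument is essentially the standard one found there.

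\emph{Forward direction.} Density of finite-rank operators together with the right ideal property gives $\|A-AP_n\|_{\gamma_\infty(H,X)}\to 0$. Lemma~\ref{lem:finite:rank} turns this into the statement that the partial sums are Cauchy in $L^2(\Omega;X)$. It\^o--Nisio (or L\'evy) then upgrades this to almost sure convergence.

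\emph{Converse.} Almost sure convergence of a Gaussian series implies $L^2$-convergence, by the remark after Theorem~\ref{thm:gaussian:series}. Hence $(AP_n)$ is Cauchy in $\gamma_\infty(H,X)$. Its limit lies in the closed subspace $\gamma(H,X)$, and it equals $A$ because $AP_nh\to Ah$ for every $h$.

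Two justifications could be stated more precisely.
\begin{enumerate}
\item The ideal property $\|BP\|_{\gamma_\infty}\le\|B\|_{\gamma_\infty}$ follows from covariance domination. For a finite orthonormal system $(e_k)$, the Gaussian vector $\sum_k\xi_kPe_k$ has covariance dominated by the identity on the finite-dimensional space $V=\operatorname{span}\{Pe_k\}$. So after applying $B$, its second moment is at most that of $\sum_j\xi_jBf_j$ for an orthonormal basis $(f_j)$ of $V$, which is at most $\|B\|_{\gamma_\infty}^2$.
\item For finite-rank $Fh=\sum_{j=1}^m (h,g_j)x_j$ there is a direct estimate: $\|F-FP_n\|_{\gamma_\infty}\le\sum_{j=1}^m\|(I-P_n)g_j\|\,\|x_j\|\to 0$. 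This uses that the rank-one operator $h\mapsto(h,g)x$ has $\gamma_\infty$-norm exactly $\|g\|\,\|x\|$. It avoids the appeal to norm equivalence on the range.
\end{enumerate}
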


It is straightforward to verify that, in the special case where \( X \) is a Hilbert space and \( A \in \mathcal{L}(H,X) \), the Gaussian series \eqref{eq:gamma:radonifying} converges almost surely if and only if \( A \) is a Hilbert--Schmidt operator.
Recall that \( A \) is Hilbert--Schmidt if
\begin{equation} \label{eq:hilbert:schmidt}
    \sup_{(h_k)_{k=1}^n \in \mathcal{F}(H)} \sum_{k=1}^n \| Ah_k \|^2 < \infty.
\end{equation}

Theorems~\ref{thm:gaussian:series} and \ref{thm:gamma:radonifying} together characterize the covariance operators of Radon Gaussian measures.
See also \cite[Proposition~8.6]{neerven2010} and \cite[Theorem~V.5.4]{vakhania1987}.

\begin{theorem} \label{thm:gaussian:covariance}
    A symmetric positive operator \( R : X^* \to X \) is the covariance operator of a centered Radon Gaussian measure on \( (X,\mathcal{B}(X)) \) if and only if the embedding \( \iota : H(R) \to X \) is \( \gamma \)-radonifying.
\end{theorem}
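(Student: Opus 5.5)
Both directions combine Theorem~\ref{thm:gaussian:series} with Theorem~\ref{thm:gamma:radonifying}, together with the factorization $R = \iota\iota^*$ of Lemma~\ref{lem:factorization}. The one preliminary issue is separability of $H(R)$, since Theorem~\ref{thm:gamma:radonifying} is stated for separable Hilbert spaces.

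\emph{Necessity.} Suppose $R$ is the covariance operator of a centered Radon Gaussian measure $\gamma$. As recalled before Theorem~\ref{thm:gaussian:series}, $H(R)$ is then separable, so we may fix an orthonormal basis $(h_k)$. By Theorem~\ref{thm:gaussian:series}, the series $\sum_k \xi_k h_k = \sum_k \xi_k \iota h_k$ converges almost surely in $X$. Theorem~\ref{thm:gamma:radonifying}, applied with $A=\iota$, then gives $\iota \in \gamma(H(R),X)$.

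\emph{Sufficiency.} Suppose $\iota$ is $\gamma$-radonifying. The first task is to show that $H(R)$ is separable. Since $\iota$ is a limit in $\gamma_\infty$, hence in operator norm, of finite-rank operators, it is compact. Injectivity of $\iota$ then gives separability, for the following reason. The image $\iota(\overline{B}_{H})$ is compact, hence separable. The unit ball $\overline{B}_H$ is weakly compact, and $\iota$ restricted to it is a continuous injection from the weak topology into the norm topology (compact operators map weakly convergent nets to norm convergent ones on bounded sets). It is therefore a homeomorphism onto its image. Hence $\overline{B}_H$ is weakly separable, so $H$ is separable, because the closed span of a weakly dense countable set is weakly closed. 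Alternatively, one can quote the remark that $\gamma$-radonifying operators are supported on a separable subspace, i.e.\ they vanish on the orthogonal complement of a separable subspace; injectivity then forces $H(R)$ itself to be separable.

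With separability in hand, fix an orthonormal basis $(h_k)$ of $H(R)$. By Theorem~\ref{thm:gamma:radonifying}, $S=\sum_k \xi_k h_k$ converges almost surely in $X$. Let $\gamma$ be the law of $S$. Since $S$ is an almost sure limit of random elements taking values in the separable closed span of $(h_k)$, the measure $\gamma$ is concentrated on a separable closed subspace. It is therefore Radon by the remark following Theorem~\ref{thm:Gaussian:moments} (Ulam). Next compute the characteristic functional. For $f\in X^*$, the variable $f(S)=\sum_k \xi_k\langle h_k,f\rangle$ is an almost sure limit of centered Gaussians with variances $\sum_{k\le n}\langle h_k,f\rangle^2$. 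These variances converge to $\sum_k (h_k,Rf)^2=\|Rf\|_{H}^2=\langle Rf,f\rangle$, using the reproducing property \eqref{eq:reproducing:property} and Parseval. Hence
\[
\widehat{\gamma}(f)=\exp\bigl(-\tfrac12\langle Rf,f\rangle\bigr),
\]
so $\gamma$ is a centered Radon Gaussian measure. Its covariance form is $\langle Rf,f\rangle$, and polarization identifies the covariance operator as $R$; uniqueness is guaranteed by Theorem~\ref{thm:Gaussian:moments}.

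The main obstacle is the separability of $H(R)$ in the sufficiency direction; everything else is a direct assembly of the cited results.
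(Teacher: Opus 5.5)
Your proposal is correct and follows the paper's proof essentially step for step. Necessity goes through Theorem~\ref{thm:gaussian:series} and Theorem~\ref{thm:gamma:radonifying}; sufficiency proceeds by getting separability of \( H(R) \), summing the Gaussian series, and identifying the characteristic functional and Radon property. The one difference is in the separability of \( H(R) \): you give a self-contained compactness-plus-injectivity argument, which is valid, whereas the paper simply invokes the fact that \( \gamma \)-radonifying operators vanish on the orthogonal complement of a separable subspace. That fact is exactly your stated alternative.
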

\begin{proof}
    Suppose that there exists a Radon Gaussian measure \( \gamma \) on \( (X,\mathcal{B}(X)) \) with covariance operator \( R \).
    Then \( H(R) \) is separable with orthonormal basis \( (h_k) \), and by Theorem~\ref{thm:gaussian:series} the Gaussian series \( \sum_{k=1}^\infty \xi_k h_k \) converges almost surely in \( X \).
    Hence, by Theorem~\ref{thm:gamma:radonifying}, the embedding \( \iota \) is \( \gamma \)-radonifying.

    Conversely, suppose that \( \iota \) is \( \gamma \)-radonifying.
    Because \( \iota \) is injective, it follows that \( H(R) = \operatorname{Ker}(\iota)^\perp \), and therefore \( H(R) \) is separable.
    Let \( (h_k) \) be an orthonormal basis for \( H(R) \).
    Then the Gaussian series \( \sum \xi_k h_k \) converges almost surely in \( X \) by Theorem~\ref{thm:gamma:radonifying}.
    Let \( S = \sum \xi_k h_k \) be the limit and let \( \gamma \) be its distribution.
    From the Lebesgue dominated convergence theorem it follows that
    \begin{equation*}
        \widehat{\gamma}(f) = \exp{\left( -\frac{1}{2}\langle Rf, f \rangle \right)}, \quad \forall f \in X^*.
    \end{equation*}
    Hence \( \gamma \) is a centered Gaussian measure.
    It is Radon because \( S \) is separably valued, and its covariance operator is \( R \).
\end{proof}

\section{Reproducing Kernel Banach Spaces}
\label{sec:rkbs}

Natural candidates for sample path spaces of random processes are function spaces that are continuously embedded into \( \mathbb{R}^T \).
Such spaces are characterized by the continuity of pointwise evaluation.

\begin{definition} \label{def:rkbs}
    A Banach space \( X \subseteq \mathbb{R}^T \) is called a \emph{reproducing kernel Banach space} if for every \( t \in T \), the functional \( \delta_t:X\to \mathbb{R},\; x\mapsto x(t) \) is continuous.
\end{definition}

If \( X \) is a Hilbert space it is called a \emph{reproducing kernel Hilbert space}.
Throughout this section, \( X \subseteq \mathbb{R}^T \) denotes a reproducing kernel Banach space with norm \( \|\cdot\| \), and \( \delta_t : X \to \mathbb{R} \) denotes the evaluation functional at \( t \in T \).

We show that covariance operators are uniquely determined by positive definite functions and characterize those positive definite functions that arise as kernels of Gaussian covariance operators.
For a discussion of the role of reproducing kernel Banach spaces in machine learning, we refer to \cite{bartolucci2023} and the references therein.

\begin{definition} \label{def:reproducing:kernel}
    Let \( R:X^*\to X \) be a symmetric positive operator.
    The \emph{kernel} of \( R \) is the function
    \begin{equation} \label{eq:reproducing:kernel}
        K:T\times T \to\mathbb{R}, \quad (s,t) \mapsto \langle R\delta_s, \delta_t \rangle, \quad \forall s,t\in T.
    \end{equation}
\end{definition}

It is clear that the kernel of \( R \) is a well-defined positive definite function and that \( R \delta_t = K(\cdot,t) \) for all \( t \in T \).
Moreover, since \( \Gamma = \{ \delta_t : t \in T \} \) separates points and \( R \) is symmetric, it follows that \( R \) is uniquely determined by its kernel \( K \).
In analogy with random processes, when \( R \) is a covariance operator of a weakly second order probability measure \( \mu \) on \( (X,\mathcal{C}(X)) \) we call \( K \) the \emph{covariance function} of \( \mu \).

Not every positive definite function determines a covariance operator of a Radon Gaussian measures, nor does it necessarily determine a symmetric positive operator.
To obtain necessary and sufficient conditions for the existence of such an operator, we recall the celebrated Moore--Aronszajn theorem \cite{aronszajn1950}.

\begin{theorem}[Moore--Aronszajn] \label{thm:Moore:Aronszajn}
    Let \( K : T \times T \to \mathbb{R} \) be a positive definite function.
    Then there exists a unique Hilbert space \( H \subseteq \mathbb{R}^T \) such that
    \begin{enumerate}
        \item \label{thm:Moore:Aronszajn:inclusion}
        \( K(\cdot,t) \in H \) for all \( t \in T \);

        \item \label{thm:Moore:Aronszajn:reproducing}
        \( (x,K(\cdot,t))_H = x(t) \) for all \( x \in H \) and \( t \in T \).
    \end{enumerate}
\end{theorem}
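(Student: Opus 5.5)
The plan is the classical construction of the reproducing kernel Hilbert space: build a pre-Hilbert space from finite combinations of kernel sections, complete it abstractly, and then identify the completion with a space of functions on \( T \). For uniqueness, we show that any Hilbert space with properties (1) and (2) must contain that span as a dense subspace carrying the same inner product.

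First, let \( H_0 = \operatorname{span}\{ K(\cdot,t) : t \in T \} \subseteq \mathbb{R}^T \). For \( x = \sum_i a_i K(\cdot,s_i) \) and \( y = \sum_j b_j K(\cdot,t_j) \), define
\[
(x,y)_0 = \sum_{i,j} a_i b_j K(s_i,t_j).
\]
This is well defined, because it equals \( \sum_j b_j x(t_j) \) and also \( \sum_i a_i y(s_i) \) by symmetry of \( K \); hence it depends only on the functions \( x, y \) and not on the chosen representations. The form is bilinear and symmetric, and it is positive semidefinite by positive definiteness \eqref{eq:positive:definite}. It also satisfies the reproducing property \( (x,K(\cdot,t))_0 = x(t) \). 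By Cauchy--Schwarz,
\[
|x(t)| \le \sqrt{(x,x)_0}\,\sqrt{K(t,t)},
\]
so \( (x,x)_0 = 0 \) forces \( x = 0 \). Thus \( (\cdot,\cdot)_0 \) is a genuine inner product on \( H_0 \).

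Second, let \( H \) be the set of functions \( x \in \mathbb{R}^T \) that arise as pointwise limits of Cauchy sequences \( (x_n) \) in \( (H_0,\|\cdot\|_0) \). Such limits exist, since \( |x_n(t)-x_m(t)| \le \|x_n-x_m\|_0 \sqrt{K(t,t)} \) shows that Cauchy sequences are pointwise Cauchy. Define \( (x,y)_H = \lim_n (x_n,y_n)_0 \). The main obstacle is showing this is well defined and positive definite. Concretely, if \( (x_n) \) is Cauchy in \( H_0 \) and \( x_n \to 0 \) pointwise, we must show \( \|x_n\|_0 \to 0 \). I would argue as follows. Fix \( \varepsilon > 0 \) and choose \( N \) with \( \|x_n - x_m\|_0 < \varepsilon \) for \( n,m \ge N \). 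Write \( x_N = \sum_i a_i K(\cdot,s_i) \). Then
\[
\|x_n\|_0^2 = (x_n - x_N, x_n)_0 + (x_N, x_n)_0 \le \varepsilon \sup_m \|x_m\|_0 + \sum_i a_i x_n(s_i),
\]
and the last sum tends to \( 0 \) as \( n \to \infty \) because \( x_n \to 0 \) pointwise at the finitely many points \( s_i \). Hence \( \limsup_n \|x_n\|_0^2 \le C\varepsilon \), where \( C = \sup_m \|x_m\|_0 < \infty \). Once this is established, \( H \) is isometrically the abstract completion of \( H_0 \) and is therefore a Hilbert space. Property (1) holds since \( H_0 \subseteq H \). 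Property (2) follows by passing to the limit in \( (x_n,K(\cdot,t))_0 = x_n(t) \).

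Finally, for uniqueness, suppose \( H' \) is another Hilbert space satisfying (1) and (2). By (2), its inner product agrees with \( (\cdot,\cdot)_0 \) on \( H_0 \), so \( H_0 \subseteq H' \) isometrically. The span \( H_0 \) is dense in \( H' \): if \( x \perp H_0 \), then \( x(t) = (x,K(\cdot,t)) = 0 \) for all \( t \), so \( x = 0 \). Norm convergence in \( H' \) implies pointwise convergence, again by (2) and Cauchy--Schwarz. Hence every \( x \in H' \) is the pointwise limit of a Cauchy sequence in \( H_0 \), which gives \( H' \subseteq H \) with the same norm. Conversely, each such limit lies in \( H' \) by completeness of \( H' \), so \( H' = H \) isometrically.
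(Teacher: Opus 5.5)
The paper gives no proof of this theorem; it cites Aronszajn \cite{aronszajn1950}, and your argument is Aronszajn's classical construction, which is correct. That construction has four parts: the pre-Hilbert space \( H_0 \); the key lemma that an \( H_0 \)-Cauchy sequence converging pointwise to zero converges to zero in norm; identification of the completion with the resulting pointwise limits; and uniqueness via density of \( H_0 \). Your appeal to symmetry of \( K \) is genuinely needed, since properties (1)--(2) force \( K(s,t)=K(t,s) \), so symmetry should be read as part of the hypothesis. For real-valued \( K \) it does not follow from \eqref{eq:positive:definite} alone, e.g.\ \( T=\{1,2\} \) with \( K(1,1)=K(2,2)=1 \) and \( K(1,2)=-K(2,1)=1 \).
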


It follows from property (\ref{thm:Moore:Aronszajn:reproducing}) that \( H \) is a reproducing kernel Hilbert space.
For a positive definite function \( K:T \times T \to \mathbb{R} \), we denote the corresponding Hilbert space by \( H(K) \), and \( K \) is called the \emph{reproducing kernel} of \( H(K) \).
The notation and terminology are justified by the following results.

\begin{lemma}[Kernel] \label{lem:kernel}
    Let \( R : X^* \to X \) be a symmetric positive operator with kernel \( K : T \times T \to \mathbb{R} \).
    Then \( H(R) = H(K) \).
\end{lemma}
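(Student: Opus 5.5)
The plan is to show that \( H(R) \), the Hilbert space of Proposition~\ref{prop:cameron:martin}, satisfies the two properties in the Moore--Aronszajn theorem (Theorem~\ref{thm:Moore:Aronszajn}). The uniqueness part of that theorem then gives \( H(R) = H(K) \) as sets and as inner product spaces. Since \( H(R) \subseteq X \subseteq \mathbb{R}^T \), the space \( H(R) \) is a Hilbert space of functions on \( T \), so the uniqueness statement applies to it directly.

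\textbf{Step 1 (inclusion).} Since \( \delta_t \in X^* \) for every \( t \in T \) (because \( X \) is a reproducing kernel Banach space), we have \( R\delta_t \in R(X^*) \subseteq H(R) \). By Definition~\ref{def:reproducing:kernel}, \( R\delta_t = K(\cdot,t) \): for every \( s \in T \), \( (R\delta_t)(s) = \langle R\delta_t, \delta_s\rangle = K(t,s) = K(s,t) \), using the symmetry of \( R \). Hence \( K(\cdot,t) \in H(R) \), which is property (\ref{thm:Moore:Aronszajn:inclusion}).

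\textbf{Step 2 (reproducing property).} For \( x \in H(R) \) and \( t \in T \), the reproducing property \eqref{eq:reproducing:property} with \( f = \delta_t \) gives
\[
(x, K(\cdot,t))_{H(R)} = (x, R\delta_t) = \delta_t(x) = x(t).
\]
This is property (\ref{thm:Moore:Aronszajn:reproducing}). The one point to check is that \eqref{eq:reproducing:property} holds for all \( x \in H(R) \) and not only on the dense subspace \( R(X^*) \). This follows by density: \( x \mapsto (x,Rf) \) is continuous on \( H(R) \), and \( x \mapsto f(\iota x) \) is continuous on \( H(R) \) because \( \iota \) is continuous. The two functionals agree on \( R(X^*) \) by \eqref{eq:cameron:martin:inner}, so they agree on all of \( H(R) \). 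This density argument is the only mildly delicate step, and the paper already records the resulting identity.

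\textbf{Step 3 (conclusion).} By the uniqueness in Theorem~\ref{thm:Moore:Aronszajn}, \( H(R) = H(K) \) with identical inner products. If one wants to avoid relying on how uniqueness is phrased, there is a direct argument. The span of \( \{K(\cdot,t)\} \) is dense in \( H(R) \), since any \( x \) orthogonal to all \( K(\cdot,t) \) satisfies \( x(t) = 0 \) for all \( t \), so \( x = 0 \). The same holds in \( H(K) \). The two inner products agree on this span, both being given by \( K \), so both spaces are the same completion realized inside \( \mathbb{R}^T \) by pointwise evaluation, and hence they coincide.
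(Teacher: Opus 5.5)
Your proposal is correct and follows the paper's proof: you check that \( H(R) \) contains every \( K(\cdot,t) = R\delta_t \) and satisfies \( (x, K(\cdot,t))_{H(R)} = x(t) \) via the reproducing property \eqref{eq:reproducing:property}, then conclude by the uniqueness part of the Moore--Aronszajn theorem. Your additional checks are sound and make explicit what the paper takes for granted: that \( R\delta_t = K(\cdot,t) \) via symmetry, that the reproducing property extends from \( R(X^*) \) to all of \( H(R) \) by density, and the direct completion argument for uniqueness.
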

\begin{proof}
    Let \( H(R) \) be the Hilbert space associated with \( R \) in Proposition~\ref{prop:cameron:martin}.
    From \( R\delta_t = K(\cdot,t) \) and \( R(X^*) \subseteq H(R) \), we obtain that \( K(\cdot,t) \in H(R) \) for all \( t \in T \).
    Moreover, by the reproducing property \eqref{eq:reproducing:property}, it follows that
    \begin{equation*}
        (x, R\delta_t)_{H(R)} = (x, K(\cdot,t))_{H(R)} = \langle \iota x, \delta_t \rangle = \delta_t(x) = x(t), \qquad \forall x \in H(R), \, t \in T.
    \end{equation*}
    Hence, \( H(R) \) satisfies the properties of Theorem~\ref{thm:Moore:Aronszajn}, and thus \( H(R) = H(K) \) by uniqueness of the spaces \( H(R) \) and \( H(K) \).
\end{proof}

The previous lemma suggests that a symmetric positive operator factors through the Hilbert space associated with its kernel in the sense of Lemma~\ref{lem:factorization}.
This leads to the following characterization of kernels of symmetric positive operators.

\begin{proposition} \label{prop:kernel}
    Let \( K : T \times T \to \mathbb{R} \) be a positive definite function.
    The following are equivalent:
    \begin{enumerate}
        \item There exists a symmetric positive operator \( R : X^* \to X \) with kernel \( K \).
        \item \( H(K) \subseteq X \) and the embedding \( \iota : H(K) \to X \) is continuous.
    \end{enumerate}
\end{proposition}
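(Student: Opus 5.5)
The two implications are proved separately. For (1) \(\Rightarrow\) (2), let \(R : X^* \to X\) be a symmetric positive operator with kernel \(K\). Proposition~\ref{prop:cameron:martin} provides the Hilbert space \(H(R) \subseteq X\) together with a continuous embedding \(\iota : H(R) \to X\). Lemma~\ref{lem:kernel} identifies \(H(R) = H(K)\) as Hilbert spaces. Hence \(H(K) \subseteq X\) with a continuous embedding, and this direction is immediate.

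For (2) \(\Rightarrow\) (1), assume \(H(K) \subseteq X\) and that \(\iota : H(K) \to X\) is continuous. Define \(R = \iota\iota^* : X^* \to X\), where \(H(K)\) is identified with its dual via Riesz. The remark following Lemma~\ref{lem:factorization} shows that \(R\) is symmetric and positive. Indeed,
\begin{equation*}
    \langle Rf, g\rangle = (\iota^* f, \iota^* g)_{H(K)},
\end{equation*}
which is symmetric in \(f, g\) and nonnegative when \(f = g\).

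It remains to compute the kernel of \(R\). For \(x \in H(K)\) and \(t \in T\), the reproducing property of Theorem~\ref{thm:Moore:Aronszajn} gives
\begin{equation*}
    (x, \iota^*\delta_t)_{H(K)} = \langle \iota x, \delta_t\rangle = x(t) = (x, K(\cdot,t))_{H(K)}.
\end{equation*}
Here the middle equality uses that \(\delta_t\) restricted to \(H(K) \subseteq X\) is just evaluation at \(t\), because the inclusion is an inclusion of function spaces in \(\mathbb{R}^T\). Since this holds for all \(x \in H(K)\), we obtain \(\iota^*\delta_t = K(\cdot,t)\). Therefore \(R\delta_t = K(\cdot,t)\), viewed as an element of \(X\), and
\begin{equation*}
    \langle R\delta_s, \delta_t\rangle = K(t,s) = K(s,t)
\end{equation*}
by the symmetry of positive definite functions. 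So \(K\) is the kernel of \(R\).

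The only point needing care is the consistency between \(\delta_t\) as an element of \(X^*\) and the point evaluation on \(H(K)\). This is where the hypothesis \(H(K) \subseteq X \subseteq \mathbb{R}^T\), with both being spaces of functions on the same set \(T\), is used. The rest is routine.
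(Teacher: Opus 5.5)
Your proof is correct and follows the paper's argument essentially line for line. For (1)$\Rightarrow$(2) you use Lemma~\ref{lem:kernel} together with Proposition~\ref{prop:cameron:martin}; for (2)$\Rightarrow$(1) you set $R=\iota\iota^*$ and identify $\iota^*\delta_t$ with $K(\cdot,t)$ via the reproducing property and Riesz. You also write out the symmetry, positivity and kernel computation, including the use of $K(t,s)=K(s,t)$, which the paper leaves as ``straightforward to verify.''
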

\begin{proof}
    Assume first that there exists a symmetric, positive operator \( R : X^* \to X \) with kernel \( K \).
    By Lemma~\ref{lem:kernel}, we obtain \( H(R) = H(K) \).
    Hence \( H(K) \subseteq X \), and the embedding \( \iota : H(K) \to X \) is continuous by Proposition~\ref{prop:cameron:martin}.

    Conversely, suppose that \( H(K) \subseteq X \) and that the embedding \( \iota : H(K) \to X \) is continuous.
    Then the adjoint operator \( \iota^{*} : X^{*} \to H(K)^* \) exists and satisfies
    \begin{equation*}
        \langle \iota^* \delta_t, x \rangle_{H(K)} = \langle \delta_t, \iota x \rangle_X = x(t) = (x, K(\cdot,t))_{H(K)}, \qquad \forall x \in H(K),\, t \in T.
    \end{equation*}
    By the Riesz representation theorem, \( \iota^* \delta_t \) can be identified with \( K(\cdot,t) \) for all \( t \in T \), and hence \( R = \iota \iota^* \).
    It is straightforward to verify that \( R \) is a symmetric, positive operator with kernel \( K \).
\end{proof}

\begin{remark}
    By the closed graph theorem, the condition \( H(K) \subseteq X \) implies that the embedding \( \iota : H(K) \to X \) is continuous, see \cite[p.~382]{aronszajn1950}.
\end{remark}

The previous proposition is the Banach space analogue of \cite[Theorem~1.1]{lukic2001}.
The condition \( H(K) \subseteq X \) is the \emph{dominance} assumption, and the symmetric positive operator \( R \) induced by \( \iota \) is the corresponding \emph{dominance operator}.

Although covariance operators of Radon Gaussian measures on \( X \) are nuclear operators, the converse is not true in the Banach space setting.
Moreover, nuclear dominance concerns the operator \( R \) itself rather than the embedding \( \iota : H(K) \to X \).
From the perspective of constructing (Radon) Gaussian measures on \( X \) from \( K \), it is arguably more natural to impose conditions on the embedding \( \iota \) rather than on the induced operator \( R \).

\begin{theorem} \label{thm:covariance:function}
    Let \( K : T \times T \to \mathbb{R} \) be a positive definite function.
    Then there exists a centered Radon Gaussian measure on \( (X,\mathcal{B}(X)) \) with covariance function \( K \) if and only if \( H(K) \subseteq X \) and \( \iota : H(K) \to X \) is \( \gamma \)-radonifying.
\end{theorem}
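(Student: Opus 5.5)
The plan is to reduce Theorem~\ref{thm:covariance:function} to Theorem~\ref{thm:gaussian:covariance}, using Lemma~\ref{lem:kernel} and Proposition~\ref{prop:kernel} to move between the kernel \( K \) and symmetric positive operators \( R : X^* \to X \).

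For necessity, suppose \( \gamma \) is a centered Radon Gaussian measure on \( (X,\mathcal{B}(X)) \) with covariance function \( K \). By Theorem~\ref{thm:Gaussian:moments}, its covariance is represented by a linear operator \( R : X^* \to X \) with \( \langle Rf, g\rangle = \int_X f(x)g(x)\,\gamma(dx) \). This \( R \) is symmetric and positive, since \( \langle Rf,f\rangle = \int f^2\,d\gamma \geq 0 \). By the definition of covariance function, the kernel of \( R \) is \( \langle R\delta_s,\delta_t\rangle = K(s,t) \). Lemma~\ref{lem:kernel} then gives \( H(R) = H(K) \), so \( H(K) \subseteq X \), and the embedding \( \iota : H(K) \to X \) coincides with the embedding \( H(R) \to X \). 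Theorem~\ref{thm:gaussian:covariance} shows that this embedding is \( \gamma \)-radonifying.

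For sufficiency, assume \( H(K) \subseteq X \) and that \( \iota : H(K) \to X \) is \( \gamma \)-radonifying. Every \( \gamma \)-radonifying operator is bounded, because \( \gamma(H,X) \) embeds continuously into \( \mathcal{L}(H,X) \); so \( \iota \) is continuous. Proposition~\ref{prop:kernel} then produces the symmetric positive operator \( R = \iota\iota^* \) with kernel \( K \), and Lemma~\ref{lem:kernel} gives \( H(R) = H(K) \). The embedding \( H(R) \to X \) is therefore exactly \( \iota \), hence \( \gamma \)-radonifying. By Theorem~\ref{thm:gaussian:covariance}, \( R \) is the covariance operator of a centered Radon Gaussian measure \( \gamma \). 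Its covariance function is \( \langle R\delta_s,\delta_t\rangle = K(s,t) \).

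There is no real analytic obstacle, since the work has been done in the earlier results. The step needing most care is bookkeeping: checking that ``covariance function \( K \)'' as defined agrees with the kernel of the covariance operator from Theorem~\ref{thm:Gaussian:moments}. One must also confirm that \( H(R) = H(K) \) holds as an identity of subspaces of \( X \) with the same inner product, so that the two embeddings are literally the same operator. Uniqueness in the Moore--Aronszajn theorem together with Lemma~\ref{lem:kernel} secures both points.
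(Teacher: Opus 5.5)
Your proposal is correct and follows essentially the same route as the paper: both directions reduce to Theorem~\ref{thm:gaussian:covariance} through the identification \( H(R) = H(K) \) from Lemma~\ref{lem:kernel}, with \( R \) taken as the covariance operator for necessity and \( R = \iota\iota^* \) for sufficiency. Your extra bookkeeping (continuity of \( \iota \), and \( H(R) = H(K) \) holding with the same inner product by Moore--Aronszajn uniqueness) makes explicit what the paper leaves implicit.
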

\begin{proof}
    Suppose first that there exists a Radon Gaussian measure \( \gamma \) on \( X \) with covariance function \( K \), and let \( R : X^* \to X \) be its covariance operator.
    Then \( H(R) \) coincides with \( H(K) \), and by Theorem~\ref{thm:gaussian:covariance} the embedding \( \iota \) is \( \gamma \)-radonifying.

    Conversely, suppose that \( H(K) \subseteq X \) and that \( \iota \) is \( \gamma \)-radonifying.
    Let \( R = \iota \iota^* \).
    Then \( R \) is a symmetric positive operator with \( H(R) = H(K) \).
    By Theorem~\ref{thm:gaussian:covariance}, \( R \) is the covariance operator of a centered Radon Gaussian measure \( \gamma \) on \( (X,\mathcal{B}(X)) \).
    The covariance function of \( \gamma \) is \( K \).
\end{proof}

In the remainder of this section we establish several properties of reproducing kernel Banach spaces that will be used to address measurability issues for Gaussian processes by restricting to countable index sets.
For a subset \( S \subseteq T \) and \( x \in \mathbb{R}^T \), we write \( x|_S \) for the restriction of \( x \) to \( S \).

\begin{theorem} \label{thm:restriction}
    Let \( S \subseteq T \) be a non-empty subset.
    Then \( X(S) = \{ x|_S : x \in X \} \) is a reproducing kernel Banach space with respect to the norm
    \begin{equation} \label{eq:restriction}
        \| y \|_{X(S)} =
        \inf_{\substack{
            x \in X \\
            x|_S = y
        }} \| x \|_X.
    \end{equation}
\end{theorem}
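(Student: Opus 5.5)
The plan is to realize \( X(S) \) as a quotient of \( X \) by a closed subspace. Consider the restriction map
\begin{equation*}
    \rho : X \to \mathbb{R}^S, \qquad x \mapsto x|_S .
\end{equation*}
It is linear, and its image is exactly \( X(S) \). Its kernel is
\begin{equation*}
    N = \{ x \in X : x(s) = 0 \ \forall s \in S \} = \bigcap_{s \in S} \operatorname{Ker}(\delta_s).
\end{equation*}
Each \( \delta_s \) is continuous because \( X \) is a reproducing kernel Banach space, so \( N \) is an intersection of closed hyperplanes and hence a closed subspace of \( X \).

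By standard Banach space theory, the quotient \( X/N \) with the quotient norm
\begin{equation*}
    \|[x]\| = \inf_{n \in N} \|x + n\|
\end{equation*}
is a Banach space. This uses that \( N \) is closed: the quotient seminorm is then a norm, and completeness is inherited from \( X \).

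The map \( \rho \) induces a linear bijection \( \tilde\rho : X/N \to X(S) \) given by \( [x] \mapsto x|_S \). It is well defined and injective precisely because \( N = \operatorname{Ker}\rho \), and it is surjective by the definition of \( X(S) \). For \( y = x|_S \), the set \( \{ x' \in X : x'|_S = y \} \) is the coset \( x + N \). Hence \( \|y\|_{X(S)} \) in \eqref{eq:restriction} equals \( \|[x]\|_{X/N} \), so \( \tilde\rho \) is an isometric isomorphism. In particular, \eqref{eq:restriction} defines a genuine norm on \( X(S) \), and \( X(S) \) is complete.

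It remains to check that point evaluations are continuous on \( X(S) \). Fix \( s \in S \), let \( y \in X(S) \), and take any \( x \in X \) with \( x|_S = y \). Then
\begin{equation*}
    |y(s)| = |x(s)| \le \|\delta_s\|_{X^*} \|x\|_X .
\end{equation*}
Taking the infimum over all such \( x \) gives \( |y(s)| \le \|\delta_s\|_{X^*} \|y\|_{X(S)} \), so \( \delta_s \) is bounded on \( X(S) \). Also \( X(S) \subseteq \mathbb{R}^S \) by construction, so \( X(S) \) is a reproducing kernel Banach space on \( S \).

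The only point needing care is the closedness of \( N \). This is exactly where the continuity of \( \delta_s \) on \( X \) is used. Without it, the infimum in \eqref{eq:restriction} would only be a seminorm, and the quotient could fail to be complete.
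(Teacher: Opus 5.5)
Your proposal is correct and follows essentially the same route as the paper: you identify \( X(S) \) with the quotient \( X/N \) by the closed subspace \( N = \bigcap_{s \in S} \operatorname{Ker}(\delta_s) \), observe that the coset \( x + N \) is exactly the set of extensions of \( y = x|_S \) so that the norm \eqref{eq:restriction} is the quotient norm, and bound \( |y(s)| \le \|\delta_s\|_{X^*}\|y\|_{X(S)} \) by taking the infimum over extensions. The only imprecision is calling \( \operatorname{Ker}(\delta_s) \) a hyperplane, which fails when \( \delta_s = 0 \) on \( X \), but that kernel is then all of \( X \) and still closed, so nothing changes.
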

\begin{proof}
    Let \( N = \bigcap_{s \in S} \{ x \in X : \delta_s(x) = 0 \} \), where \( \delta_s : X \to \mathbb{R}, \, x \mapsto x(s) \) is the evaluation functional at \( s \).
    By continuity of the evaluation functionals, \( N \) is the intersection of closed subspaces of \( X \), and thus is itself a closed subspace of \( X \).

    Let \( X / N \) be the quotient space, and let \( \pi: X \to X/N, \, x \mapsto x + N \) be the quotient map.
    It follows from \cite[Theorem 1.41]{rudin1991} that \( X/N \) is a Banach space with respect to the norm
    \begin{equation*}
        \| \pi(x) \|_{X/N} = \inf_{z \in N}\| x-z \|_X.
    \end{equation*}

    Define \( \Phi: X/N \to X(S),\, x + N \mapsto x|_S \).
    It is clear that \( \Phi \) is a well-defined linear operator, because if \( x+N = z+N \), then \( x|_S = z|_S \) by definition of \( N \).
    Moreover, \( \Phi \circ \pi(x)=x|_S \) for all \( x\in X \), so \( \Phi \) is surjective; and if \( \Phi(x+N)=0 \), then \( x|_S=0 \), whence \( x \in N \) and \( x+N = 0 \), so \( \Phi \) is injective.

    Therefore \( \Phi \) is a linear isomorphism, and \( X(S) \) inherits a Banach space structure from \( X/N \) with the norm given by \( \|y\|_{X(S)} = \|\Phi^{-1}(y)\|_{X/N} \).
    In particular, for \( y = \Phi\circ\pi(x) \), it follows that \( x|_S = y \), and \( z|_S = y \) if and only if \( z \in x+N \).
    Thus,
    \begin{equation*}
        \| y \|_{X(S)} = \| \pi(x) \|_{X / N} = \inf_{z \in N} \| x-z \|_X = \inf_{\substack{
            x \in X \\
            x|_S = y
        }} \| x \|_X.
    \end{equation*}

    Finally, let \( \tilde{\delta}_s: X(S) \to \mathbb{R}, \, y \mapsto y(s) \) be the evaluation functional at \( s \in S \).
    Then, by continuity of \( \delta_s \), there exists a constant \( C_s > 0 \) such that
    \begin{equation*}
        |\tilde{\delta}_s(y)| = |y(s)| = |x(s)| = |\delta_s(x)| \leq C_s \| x \|_X, \qquad \forall x\in X,\; x|_S = y.
    \end{equation*}
    Taking the infimum over all \( x \in X \) with \( x|_S = y \) shows that \( |\tilde{\delta}_s(y)| \leq C_s \| y \|_{X(S)} \).
    Hence \( \tilde{\delta}_s \) is bounded and thus continuous for every \( s \in S \).
\end{proof}

For a subset \( S \subseteq T \), we denote by \( X(S) \) the restriction of \( X \) to \( S \) equipped with with the norm \eqref{eq:restriction}, and by \( \pi_S : X \to X(S), \, x \mapsto x|_S \) the corresponding restriction map.
A subset \( S \) is called a \emph{determining set} for \( X \) if \( \pi_S \) is an isometric isomorphism.
This terminology goes back to \cite{fortet1974}.
Theorem~\ref{thm:restriction} yields the following equivalent conditions for \( S \) to be a determining set for \( X \).

\begin{corollary} \label{cor:determining:set}
    Let \( S \subseteq T \).
    The following are equivalent:
    \begin{enumerate}
        \item \( S \) is a determining set for \( X \).
        \item The restriction \( \pi_S: X \to X(S) \) is injective.
        \item The family \( \{ \delta_s : s \in S \} \) separates points of \( X \).
    \end{enumerate}
\end{corollary}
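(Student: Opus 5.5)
We prove the equivalences in the cycle (1) \( \Rightarrow \) (2) \( \Rightarrow \) (3) \( \Rightarrow \) (1). Everything reduces to the structure of the restriction map \( \pi_S \) obtained in the proof of Theorem~\ref{thm:restriction}. There, \( \pi_S \) factors as \( \pi_S = \Phi \circ \pi \), where \( \pi : X \to X/N \) is the quotient map, \( N = \bigcap_{s \in S} \operatorname{Ker}(\delta_s) \), and \( \Phi : X/N \to X(S) \) is an isometric isomorphism by construction of the norm \eqref{eq:restriction}.

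(1) \( \Rightarrow \) (2): this is immediate, since an isometric isomorphism is in particular injective.

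(2) \( \Rightarrow \) (3): let \( x, z \in X \) with \( x \neq z \). Injectivity of \( \pi_S \) gives \( x|_S \neq z|_S \). Hence there is some \( s \in S \) with \( \delta_s(x) = x(s) \neq z(s) = \delta_s(z) \), so \( \{ \delta_s : s \in S \} \) separates points of \( X \).

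(3) \( \Rightarrow \) (1): this is the only step with content. First, separation of points means that any \( x \in N \) satisfies \( \delta_s(x) = 0 = \delta_s(0) \) for all \( s \in S \), so \( x = 0 \) and hence \( N = \{0\} \). Consequently the quotient map \( \pi : X \to X/N \) is an isometric isomorphism, because \( \|\pi(x)\|_{X/N} = \inf_{z \in N}\|x - z\|_X = \|x\|_X \). Composing with the isometric isomorphism \( \Phi \) shows that \( \pi_S \) is an isometric isomorphism.

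One can also see the isometry directly from \eqref{eq:restriction}. For \( y = x|_S \), the infimum there runs over all \( x' \in X \) with \( x'|_S = y \). By injectivity this set is the single element \( x \), so \( \|x|_S\|_{X(S)} = \|x\|_X \). Surjectivity of \( \pi_S \) onto \( X(S) \) holds by the definition of \( X(S) \).

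The main point to handle carefully is identifying \( N = \{0\} \) from the separation property; beyond that, the argument is bookkeeping with the quotient construction already established in Theorem~\ref{thm:restriction}.
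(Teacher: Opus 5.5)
Your proof is correct and is essentially the argument the paper intends. The paper gives no separate proof and presents the corollary as an immediate consequence of Theorem~\ref{thm:restriction}; you read the equivalences off that theorem's factorization \( \pi_S = \Phi \circ \pi \), noting that \( N = \{0\} \) exactly when \( \{ \delta_s : s \in S \} \) separates points of \( X \).
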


To resolve measurability issues it is of principal interest to determine when a space \( X \) admits a countable determining set.
A sufficient condition is provided by the following lemma.
For the proof, recall that a subset \( \Delta \subseteq X^* \) is said to be weak-\(*\) sequentially dense in \( \Gamma \subseteq X^* \) if for every \( f \in \Gamma \) there exists a sequence \( (f_k) \subseteq \Delta \) such that \( f_k \to f \) in the weak-\( * \) topology.
Further details on weak-\( * \) sequential density and closures can be found in \cite{banach1987,ostrovskii2001}.

\begin{lemma} \label{lem:determining:set}
    Suppose that \( X \) is separable.
    Then there exists a countable determining set \( S \subseteq T \) for \( X \).
\end{lemma}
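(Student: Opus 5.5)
By Corollary~\ref{cor:determining:set}, it suffices to produce a countable set \( S \subseteq T \) such that \( \{ \delta_s : s \in S \} \) separates the points of \( X \). The plan is to use the hint in the text about weak-\( * \) sequential density. Let \( \Gamma = \{ \delta_t : t \in T \} \subseteq X^* \). Since \( X \subseteq \mathbb{R}^T \) consists of functions, \( \Gamma \) separates points: if \( x(t) = 0 \) for all \( t \), then \( x = 0 \).

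The key step is to show that \( \Gamma \) contains a countable subset \( \Delta = \{ \delta_s : s \in S \} \) that is weak-\( * \) sequentially dense in \( \Gamma \). Because \( X \) is separable, the closed unit ball of \( X^* \) is weak-\( * \) metrizable, and hence so is every norm-bounded subset of \( X^* \). A metrizable space whose topology comes from a separable metric is second countable, so every subset is separable. Concretely, fix a dense sequence \( (x_j) \) in \( X \). On bounded sets, the metric \( d(f,g) = \sum_j 2^{-j} \min(1, |f(x_j) - g(x_j)|) \) induces the weak-\( * \) topology. The only difficulty is that \( \Gamma \) need not be bounded. To handle this, I will decompose
\[
    \Gamma = \bigcup_{n \in \mathbb{N}} \Gamma_n, \qquad \Gamma_n = \{ \delta_t : \|\delta_t\|_{X^*} \le n \}.
\]
Each \( \Gamma_n \) lies in a weak-\( * \) compact metrizable ball, so it admits a countable weak-\( * \) dense subset \( \{ \delta_s : s \in S_n \} \). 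Density in a metrizable space gives sequential density. Setting \( S = \bigcup_n S_n \) then yields a countable \( S \).

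Next comes the separation argument. Suppose \( x \in X \) satisfies \( x(s) = 0 \) for all \( s \in S \). For any \( t \in T \), choose \( n \) with \( \delta_t \in \Gamma_n \) and a sequence \( s_k \in S_n \) with \( \delta_{s_k} \to \delta_t \) weak-\( * \). Then \( x(t) = \lim_k x(s_k) = 0 \), so \( x = 0 \). Hence \( \{ \delta_s : s \in S \} \) separates points, and Corollary~\ref{cor:determining:set} shows that \( S \) is a determining set. (If \( T \) is empty the statement is vacuous, and the decomposition handles \( \Gamma_n = \emptyset \) trivially.)

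The main obstacle is the justification that bounded subsets of \( X^* \) are weak-\( * \) metrizable and separable when \( X \) is separable. I would invoke the standard fact that the dual unit ball is weak-\( * \) compact and metrizable, via Banach--Alaoglu together with the metric \( d \) above. A compact metric space is separable, and subspaces of separable metric spaces are separable. The point requiring care is that density must be taken inside \( \Gamma_n \), not merely in the ball. This is fine because separability is hereditary for metric spaces.
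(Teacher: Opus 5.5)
Your proof is correct and follows the paper's route: extract a countable weak-\( * \) sequentially dense subset of \( \Gamma = \{ \delta_t : t \in T \} \), then conclude \( x(t) = \lim_k x(s_k) = 0 \). The only difference is that the paper cites Banach for the existence of this subset, whereas you prove it yourself from weak-\( * \) metrizability of bounded subsets of \( X^* \), and your decomposition \( \Gamma = \bigcup_n \Gamma_n \) correctly handles the possible unboundedness of \( \Gamma \).
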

\begin{proof}
    Let \( \Gamma = \{ \delta_t : t \in T \} \subseteq X^* \).
    Because \( X \) is separable, there exists a countable subset \( \Delta \subseteq \Gamma \) that is weak-\( * \) sequentially dense in \( \Gamma \); see \cite[p.~76]{banach1987}.
    As each element of \( \Delta \) is of the form \( \delta_t \), there is a countable set \( S \subseteq T \) such that \( \Delta = \{ \delta_s : s \in S \} \).

    Let \( x \in X \) and suppose that \( x(s) = 0 \) for all \( s \in S \).
    For every \( t \in T \), there exists a sequence \( (s_k) \subseteq S \) such that \( \delta_{s_k} \to \delta_t \) in the weak-\( * \) topology.
    Then
    \begin{equation*}
        x(t) = \delta_t(x) = \lim_{k\to\infty}\delta_{s_k}(x) = \lim_{k\to\infty}x(s_k) = 0.
    \end{equation*}
    Hence \( x(t) = 0 \) for all \( t \in T \), so \( x = 0 \).
    Thus \( S \) is a determining set for \( X \).
\end{proof}

\begin{remark} \label{rem:determining:set}
    A simpler measure-theoretic proof can be obtained by noting that \( \mathcal{C}(X,\Gamma) = \mathcal{B}(X) \) when \( X \) is separable.
    In particular, \( \mathcal{C}(X,\Gamma) \) contains all singletons, and hence \( \Gamma \) contains a countable separating subset; see \cite[Lemma~I.2.1]{vakhania1987}.
\end{remark}

When \( X \) is a Hilbert space with reproducing kernel \( K \), the converse also holds: if \( X \) has a countable determining set, then \( X \) is separable.
In preparation for this result, we introduce the \emph{kernel metric} on \( T \), defined by
\begin{equation} \label{eq:kernel:metric}
    d_K : T \times T \to \mathbb{R}, \qquad
    (s,t) \mapsto \| K(\cdot,s) - K(\cdot,t) \|.
\end{equation}
It is readily verified that \( d_K \) defines a pseudo-metric on \( T \).
Moreover, by the reproducing property and the polarization identity, it follows that
\begin{equation} \label{eq:kernel:metric:Loeve}
    d_K(s,t)
    = \sqrt{K(s,s) + K(t,t) - 2K(s,t)},
    \qquad \forall s,t \in T.
\end{equation}

The following lemma extends \cite[Lemma~4.3]{lukic2001} by including the equivalent condition~(\ref{lem:rkhs:separability:determining}).
In \cite{lukic2001}, the proof of the implication (\ref{lem:rkhs:separability:separable})\( \implies \)(\ref{lem:rkhs:separability:TdK}) relies on \cite[Theorem~1.2]{fortet1973}, which is stated without proof.
We provide here a self-contained argument for this implication.

\begin{lemma} \label{lem:rkhs:separability}
    Let \( K : T \times T \to \mathbb{R} \) be a positive definite function.
    Then the following are equivalent:
    \begin{enumerate}
        \item \label{lem:rkhs:separability:separable}
        \( H(K) \) is separable.

        \item \label{lem:rkhs:separability:TdK}
        \( (T,d_K) \) is separable.

        \item \label{lem:rkhs:separability:determining}
        \( H(K) \) has a countable determining set.
    \end{enumerate}
\end{lemma}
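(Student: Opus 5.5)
I will prove the implications (\ref{lem:rkhs:separability:separable})\( \implies \)(\ref{lem:rkhs:separability:TdK})\( \implies \)(\ref{lem:rkhs:separability:determining})\( \implies \)(\ref{lem:rkhs:separability:separable}). The common tool is the map \( \Phi : T \to H(K) \), \( t \mapsto K(\cdot,t) \). By \eqref{eq:kernel:metric}, \( \Phi \) is an isometry from the pseudo-metric space \( (T,d_K) \) onto its image \( \Phi(T) \subseteq H(K) \). By the reproducing property, \( \operatorname{span}\Phi(T) \) is dense in \( H(K) \): an element orthogonal to every \( K(\cdot,t) \) vanishes at every \( t \).

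For (\ref{lem:rkhs:separability:separable})\( \implies \)(\ref{lem:rkhs:separability:TdK}), I use that every subset of a separable metric space is separable, so \( \Phi(T) \) is separable in the norm of \( H(K) \). Choose a countable set \( S \subseteq T \) with \( \Phi(S) \) dense in \( \Phi(T) \). Since \( \Phi \) is isometric, \( S \) is \( d_K \)-dense in \( T \). This is the self-contained replacement for the citation to Fortet. The only point needing care is the fact about subsets of separable metric spaces. I would include its short proof: take a countable dense set \( \{x_n\} \), and for each pair \( (n,m) \) choose a point of \( \Phi(T) \) within \( 1/m \) of \( x_n \), whenever such a point exists.

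For (\ref{lem:rkhs:separability:TdK})\( \implies \)(\ref{lem:rkhs:separability:determining}), let \( S \) be countable and \( d_K \)-dense. Suppose \( x \in H(K) \) vanishes on \( S \), and fix \( t \in T \). Pick \( s_k \in S \) with \( d_K(s_k,t) \to 0 \). Then
\[
|x(t)| = |x(t) - x(s_k)| = |(x, K(\cdot,t) - K(\cdot,s_k))| \le \|x\|\, d_K(s_k,t) \to 0 .
\]
Hence \( x = 0 \), so \( \{\delta_s : s \in S\} \) separates points, and \( S \) is determining by Corollary~\ref{cor:determining:set}.

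For (\ref{lem:rkhs:separability:determining})\( \implies \)(\ref{lem:rkhs:separability:separable}), let \( S \) be a countable determining set. If \( x \perp K(\cdot,s) \) for all \( s \in S \), then \( x(s) = 0 \) on \( S \), hence \( x = 0 \). Therefore \( \overline{\operatorname{span}}\{K(\cdot,s) : s \in S\} = H(K) \). Rational linear combinations of these countably many vectors then form a countable dense subset, so \( H(K) \) is separable.

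I expect no real obstacle here. The main step to write carefully is the first implication, since it is the one the paper says it is supplying itself. The other two implications are direct uses of the reproducing property.
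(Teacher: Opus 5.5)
Your proposal is correct and follows the paper's proof essentially step for step. You use the same cycle of implications (1)\( \implies \)(2)\( \implies \)(3)\( \implies \)(1), with the same three ingredients: separability of the subset \( \{K(\cdot,t) : t \in T\} \), the estimate \( |x(t)| \le d_K(s,t)\,\|x\| \) for \( x \) vanishing on \( S \), and totality of \( \{K(\cdot,s) : s \in S\} \). The extra details you include (the proof that subsets of separable metric spaces are separable, and the rational linear combinations) are harmless elaborations, not a different route.
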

\begin{proof}
    (\ref{lem:rkhs:separability:separable}) \( \implies \) (\ref{lem:rkhs:separability:TdK})
    Because \( H(K) \) is a separable metric space, every subset of \( H(K) \) is separable.
    In particular, the set \( \{ K(\cdot,t) : t \in T \} \) is a separable.
    Hence there exists a countable set \( S \subseteq T \) such that \( \{ K(\cdot,s) : s \in S \} \) is dense in \( \{ K(\cdot,t) : t \in T \} \).
    Thus for every \( t\in T \) and every \( \epsilon>0 \) there exists \( s\in S \) with
    \begin{equation*}
        d(s,t) = \| K(\cdot,s) - K(\cdot,t) \| < \epsilon.
    \end{equation*}
    Therefore \( S \) is a countable dense subset of \( (T,d_K) \), and hence \( (T,d_K) \) is separable.

    \smallskip
    (\ref{lem:rkhs:separability:TdK}) \( \implies \) (\ref{lem:rkhs:separability:determining})
    Let \( S \subseteq T \) be countable and dense in \( (T,d_K) \).
    Suppose \( x \in H(K) \) satisfies \( x|_S = 0 \), and let \( t \in T \) be fixed.
    For every \( \epsilon > 0 \), there exists \( s \in S \) such that \( d_K(s,t) < \epsilon \).
    By the reproducing property,
    \begin{equation*}
        |x(t)| = |(x,K(\cdot,t))| = |(x,K(\cdot,t)-K(\cdot,s))| \leq d(s,t)\| x \| < \epsilon \| x \|.
    \end{equation*}
    Letting \( \epsilon \to 0 \) yields \( x(t)=0 \).
    Because \( t \) was arbitrary, it follows that \( x(t) = 0 \) for all \( t \in T \), and hence \( S \) is a countable determining set for \( H(K) \).

    \smallskip
    (\ref{lem:rkhs:separability:determining}) \( \implies \) (\ref{lem:rkhs:separability:separable})
    Let \( S \subseteq T\) be a countable determining set for \( H(K) \).
    Suppose that \( x \in H(K) \) is orthogonal to the family \( \Gamma = \{ K(\cdot,s) : s \in S \} \).
    By the reproducing property,
    \begin{equation*}
        x(s) = (x, K(\cdot,s)) = 0, \qquad \forall s \in S.
    \end{equation*}
    Thus \( x|_S = 0 \), and since \( S \) is a determining set, it follows that \( x = 0 \).
    Hence \( \Gamma \) is a countable total subset of \( H(K) \) and therefore \( H(K) \) is separable.
\end{proof}

For reproducing kernel Banach spaces, a similar characterization is not available.
Indeed, the following example shows that \( (\ref{lem:rkhs:separability:determining}) \implies (\ref{lem:rkhs:separability:separable}) \) may fail.

\begin{example} \label{ex:determining:set}
    Let \( T = \mathbb{R} \) and let \( X = C_b(T) \) be the Banach space of bounded continuous functions on \( T \).
    Then \( S = \mathbb{Q} \) is a countable determining set for \( X \).
\end{example}

\section{Driscoll's Theorem}
\label{sec:Driscoll:theorem}

We are now in a position to establish the connection between Gaussian random elements in a reproducing kernel Banach space \( X \) and Gaussian random processes.
In particular, we extend the results of Lukić and Beder~\cite[Section~7]{lukic2001} to the Banach space setting and provide necessary and sufficient conditions under which a Gaussian process defines a Gaussian random element in \( X \).
While it is immediate that every Gaussian random element in \( X \) induces a Gaussian process, the converse is considerably more delicate and involves subtle measurability issues.
The main results are Theorems~\ref{thm:Driscoll:one} and~\ref{thm:Driscoll:zero}, which together yield a Banach space generalization of the classical Driscoll theorem.

Throughout this section, \( X \subseteq \mathbb{R}^T \) denotes a separable reproducing kernel Banach space and \( (\Omega,\mathcal{A},\mu) \) a probability space.
For \( t \in T \), we write \( \delta_t : \mathbb{R}^T \to \mathbb{R},\, x \mapsto x(t) \), for the evaluation functional at \( t \).
Because \( X \) is continuously embedded into \( \mathbb{R}^T \), we use the same notation for the restriction of \( \delta_t \) to \( X \).
A random element \( \xi : \Omega \to X \) will, without further comment, also be regarded as a random process over \( T \).
When \( \xi \) is viewed as a random element \( \mathbb{R}^T \) we write \( \xi_t = \langle \xi, \delta_t \rangle \) for its coordinate at \( t \).

We begin by addressing the subtle measurability issues arising when passing from a random process to an \( X \)-valued random element.

\begin{lemma} \label{lem:rkbs:nonmeasurability}
    If \( T \) is uncountable and \( C \in \mathcal{C}(\mathbb{R}^T) \) satisfies \( C \subseteq X \), then \( C = \emptyset \).
\end{lemma}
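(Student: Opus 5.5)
Every set in \( \mathcal{C}(\mathbb{R}^T) \) depends on only countably many coordinates, and we play this off against the separability of \( X \) through Lemma~\ref{lem:determining:set}. First I will establish the standard fact that each \( C \in \mathcal{C}(\mathbb{R}^T) \) has the form \( C = \pi_{S_0}^{-1}(B) \). Here \( S_0 \subseteq T \) is countable, \( B \subseteq \mathbb{R}^{S_0} \) is Borel (for the product \( \sigma \)-algebra), and \( \pi_{S_0} : \mathbb{R}^T \to \mathbb{R}^{S_0} \) is the restriction map. The proof is the usual good-sets argument. Sets of this form contain the generating sets \( \delta_t^{-1}(A) \), \( A \in \mathcal{B}(\mathbb{R}) \). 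They are closed under complements, with the same \( S_0 \). They are closed under countable unions, by enlarging to the union of the countably many index sets. Hence they form a \( \sigma \)-algebra containing the generators, and it must contain \( \mathcal{C}(\mathbb{R}^T) \).

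Next, suppose \( C \neq \emptyset \) with \( C \subseteq X \), and pick \( x \in C \). The key observation is that membership in \( C \) depends only on coordinates in \( S_0 \). So any \( y \in \mathbb{R}^T \) with \( y|_{S_0} = x|_{S_0} \) also lies in \( C \subseteq X \). Since \( T \) is uncountable and \( S_0 \) is countable, there is a point \( t_0 \in T \setminus S_0 \).

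Define \( y = x + c\,\mathbf{1}_{\{t_0\}} \) for arbitrary \( c \in \mathbb{R} \). Then \( y|_{S_0} = x|_{S_0} \), so \( y \in X \), and therefore \( \mathbf{1}_{\{t_0\}} = (y-x)/c \in X \) for \( c \neq 0 \). This holds for every \( t_0 \in T \setminus S_0 \). The same reasoning shows more: every function supported in \( T \setminus S_0 \) belongs to \( X \), since adding it to \( x \) does not change the restriction to \( S_0 \).

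The last step is to derive a contradiction with separability. By Lemma~\ref{lem:determining:set}, \( X \) has a countable determining set \( S \). For \( t_0 \in T \setminus (S_0 \cup S) \), which is nonempty because \( T \) is uncountable, the function \( \mathbf{1}_{\{t_0\}} \) is a nonzero element of \( X \) that vanishes on \( S \). By Corollary~\ref{cor:determining:set} this is impossible. So \( C = \emptyset \).

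The only step needing care is the countable-coordinate representation of cylindrical sets. It is routine, so I expect no real obstacle.
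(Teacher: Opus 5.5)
Your proof is correct and follows the same route as the paper. Both represent \(C\) as \(\pi_{S_0}^{-1}(B)\) with \(S_0\) countable: the paper cites \cite[Proposition~I.1.8]{vakhania1987}, while you prove it by the good-sets argument. Both then play the invariance of \(C\) under translations in \(\operatorname{Ker}(\pi_{S_0})\) against the countable determining set from Lemma~\ref{lem:determining:set}. Your explicit choice of \(\mathbf{1}_{\{t_0\}}\) with \(t_0 \notin S_0 \cup S\) makes the paper's rather informal final step (``no countable subset of \(T\) can uniquely determine the elements of \(C\)'') precise.
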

\begin{proof}
    Suppose, for a contradiction, that there exists a nonempty \( C \in \mathcal{C}(\mathbb{R}^T) \) such that \( C \subseteq X \).
    Let \( \Gamma = \{ \delta_t : t \in T \} \subseteq (\mathbb{R}^T)^* \).
    Since \( \operatorname{span}\Gamma = (\mathbb{R}^T)^* \), it follows that \( \mathcal{C}(\mathbb{R}^T,\Gamma) = \mathcal{C}(\mathbb{R}^T) \).
    By \cite[Proposition~I.1.8]{vakhania1987}, there exists a countable set \( S \subseteq T \) and a Borel set \( B \in \mathcal{B}(\mathbb{R}^S) \) such that
    \begin{equation*}
        C = \pi_S^{-1}(B),
        \qquad \text{where} \qquad
        \pi_S : \mathbb{R}^T \to \mathbb{R}^S,\; x \mapsto x|_S.
    \end{equation*}
    Since \( T \) is uncountable, the complement \( T \setminus S \) is uncountable.
    By linearity of \( \pi_S \), the kernel \( N = \operatorname{Ker}(\pi_S) \) is an infinite-dimensional subspace of \( \mathbb{R}^T \), and
    \begin{equation*}
        C + N = C.
    \end{equation*}
    Thus \( C \) is invariant under translations by every \( z \in N \).
    Since \( T \setminus S \) is uncountable, no countable subset of \( T \) can uniquely determine the elements of \( C \).
    But \( C \subseteq X \), and \( X \) has a countable determining set by Lemma~\ref{lem:determining:set}.
    This is a contradiction.
    Therefore \( C = \emptyset \).
\end{proof}

In particular, the lemma applies to \( C = X \), and hence \( X \notin \mathcal{C}(\mathbb{R}^T) \) whenever \( T \) is uncountable.
As a consequence, there is no guarantee that
\begin{equation*}
    \{ \omega \in \Omega : \xi(\cdot,\omega) \in X \} \in \mathcal{A}.
\end{equation*}
To address the measurability issues arising from this fact, we avoid statements of the form ``\( \xi \) has sample paths almost surely in \( X \)'' and instead work with a measurable set \( \Omega_0 \in \mathcal{A} \) with \( \mu(\Omega_0) = 1 \) such that \( \xi(\cdot,\omega) \in X \) for all \( \omega \in \Omega_0 \).
This is equivalent to \( X \) having full inner measure in the distribution \( \mu^\xi \) of \( \xi \).

\begin{lemma} \label{lem:random:process:restriction}
    Let \( \xi : T \times \Omega \to \mathbb{R} \) be a random process, and suppose that there exists \( \Omega_0 \in \mathcal{A} \) with \( \mu(\Omega_0) = 1 \) such that \( \xi(\cdot,\omega) \in X \) for all \( \omega \in \Omega_0 \).
    Then there exists a Borel random element \( \eta : \Omega \to X \) such that \( \xi_t = \eta_t \) almost surely for all \( t \in T \).
\end{lemma}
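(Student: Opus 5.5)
Define \( \eta(\omega) = \xi(\cdot,\omega) \) for \( \omega \in \Omega_0 \) and \( \eta(\omega) = 0 \) for \( \omega \in \Omega \setminus \Omega_0 \). Then \( \eta \) takes values in \( X \). For every \( t \in T \), \( \eta_t = \xi_t \) on \( \Omega_0 \), so \( \eta_t = \xi_t \) almost surely. It remains to show that \( \eta : \Omega \to X \) is \( (\mathcal{A},\mathcal{B}(X)) \)-measurable.

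For measurability, I will reduce to countably many evaluation functionals. Lemma~\ref{lem:determining:set} gives a countable determining set \( S \subseteq T \) for \( X \). Put \( \Gamma_S = \{ \delta_s : s \in S \} \subseteq X^* \). By Corollary~\ref{cor:determining:set}, \( \Gamma_S \) separates the points of \( X \). Since \( X \) is a separable Banach space, hence a separable Fréchet space, the fact recalled in Section~\ref{sec:preliminaries} gives \( \mathcal{C}(X,\Gamma_S) = \mathcal{B}(X) \). Remark~\ref{rem:determining:set} points the same way, and \cite[Ch.~I]{vakhania1987} is the source if a citation is needed for a countable separating family. It therefore suffices to check that \( \delta_s \circ \eta : \Omega \to \mathbb{R} \) is \( \mathcal{A} \)-measurable for each \( s \in S \). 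This holds because
\begin{equation*}
    \delta_s \circ \eta = \xi_s \mathbf{1}_{\Omega_0},
\end{equation*}
and \( \xi_s \) is \( \mathcal{A} \)-measurable by Definition~\ref{def:random:process}, while \( \Omega_0 \in \mathcal{A} \). Hence \( \eta^{-1}(C) \in \mathcal{A} \) for every generator \( C \) of \( \mathcal{C}(X,\Gamma_S) \), and so for every Borel set. In particular, \( \eta \) is a random element in \( X \) in the sense of Definition~\ref{def:random:element}, since \( \mathcal{C}(X) \subseteq \mathcal{B}(X) \).

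The only nontrivial step is the equality \( \mathcal{C}(X,\Gamma_S) = \mathcal{B}(X) \) for a countable separating family of continuous functionals on a separable Banach space. If the general statement for Fréchet spaces should not be taken for granted, I would argue directly. Map \( X \) into \( \mathbb{R}^S \) by \( x \mapsto (x(s))_{s\in S} \). This map is continuous and injective from a Polish space into a Polish space, so by the Lusin--Souslin theorem it sends Borel sets to Borel sets. Thus every Borel set in \( X \) is the preimage of a Borel set in \( \mathbb{R}^S \), which lies in \( \mathcal{C}(X,\Gamma_S) \). Finally, it is worth noting that measurability of \( \Omega_0 \) is exactly what is used, which is why the hypothesis is phrased in terms of \( \Omega_0 \) rather than the possibly nonmeasurable event \( \{ \xi \in X \} \) (cf.\ Lemma~\ref{lem:rkbs:nonmeasurability}).
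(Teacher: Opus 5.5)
Your proof is correct and is essentially the paper's argument: the same definition of \( \eta \) (equal to \( \xi(\cdot,\omega) \) on \( \Omega_0 \) and \( 0 \) elsewhere), measurability of the coordinates \( \delta_s \circ \eta \), and the identification of \( \mathcal{B}(X) \) with the \( \sigma \)-algebra generated by a separating family of evaluation functionals on the separable space \( X \). The only difference is cosmetic: the paper applies this to the full family \( \{ \delta_t : t \in T \} \) via \cite[Theorem~II.1.1]{vakhania1987}, whereas you first pass to a countable determining set, which also lets you justify the key identity directly by the Lusin--Souslin (Kuratowski) theorem.
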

\begin{proof}
    Define
    \begin{equation*}
        \eta : \Omega \to X, \qquad
        \omega \mapsto
        \begin{cases}
            \xi(\cdot,\omega), & \omega \in \Omega_0, \\
            0, & \omega \notin \Omega_0.
        \end{cases}
    \end{equation*}
    This is well-defined because \( \xi(\cdot,\omega) \in X \) for all \( \omega \in \Omega_0 \) and \( 0 \in X \).
    Moreover, for every \( t \in T \) and every Borel set \( B \in \mathcal{B}(\mathbb{R}) \),
    \begin{equation*}
        \eta_t^{-1}(B) =
        \begin{cases}
            \xi_t^{-1}(B) \cap \Omega_0, & 0 \notin B, \\
            (\xi_t^{-1}(B) \cap \Omega_0) \cup (\Omega \setminus \Omega_0), & 0 \in B.
        \end{cases}
    \end{equation*}
    Hence the functions \( \eta_t = \langle \eta, \delta_t \rangle, \, t \in T \) are random variables.
    Because \( X \) is separable and \( \Gamma = \{ \delta_t : t \in T \} \) separates points in \( X \), it follows from \cite[Theorem II.1.1]{vakhania1987} that \( \eta \) is a Borel random element in \( X \).
    Finally, \( \Omega_0 \subseteq \{ \omega \in \Omega : \eta_t(\omega) = \xi_t(\omega) \} \), and hence \( \mu(\eta_t = \xi_t)=1 \) for all \( t \in T \).
\end{proof}

The random element \( \eta \) is called the \emph{random element in \( X \) defined by \( \xi \)}.
It is clear that \( \eta \) is uniquely defined up to a set of probability zero.
A natural question is whether \( \eta \) is Gaussian whenever \( \xi \) is a centered Gaussian random process.

In \cite[Theorem~7.1]{lukic2001}, this is proved when \( X \) is a Hilbert space, using the weak-\( * \) sequential density of \( \Gamma = \operatorname{span}\{ \delta_t : t \in T \} \subseteq X^* \) in \( X^* \).
However, when \( X \) is a Banach space, \( \Gamma \) need not be weak-\( * \) sequentially dense in \( X^* \) unless \( X \) is reflexive or, more generally, quasi-reflexive; see \cite[Theorem~2.3]{ostrovskii2001}.
Therefore, to obtain a Banach space analogue, we instead appeal to the characterization of centered Gaussian measures through their rotational invariance.

\begin{theorem} \label{thm:GP:restriction}
    Let \( \xi : T \times \Omega \to \mathbb{R} \) be a centered Gaussian process, and suppose that there exists \( \Omega_0 \in \mathcal{A} \) with \( \mu(\Omega_0) = 1 \) such that \( \xi(\cdot,\omega) \in X \) for all \( \omega \in \Omega_0 \).
    Then the random element \( \eta : \Omega \to X \) defined by \( \xi \) is a centered Gaussian random element.
\end{theorem}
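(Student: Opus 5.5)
Since the paper says it will argue via rotational invariance, I plan to prove that \( \eta \) is centered Gaussian through the following characterization. A Radon probability measure \( \nu \) on a separable Banach space is centered Gaussian if and only if, for independent \( \eta^{(1)}, \eta^{(2)} \) with law \( \nu \), the pair \( (\eta^{(1)}\cos\theta + \eta^{(2)}\sin\theta,\; -\eta^{(1)}\sin\theta + \eta^{(2)}\cos\theta) \) has the same law as \( (\eta^{(1)}, \eta^{(2)}) \) for every \( \theta \). It suffices to take \( \theta = \pi/4 \) and require only that \( (\eta^{(1)}+\eta^{(2)})/\sqrt2 \) has law \( \nu \), together with independence of the rotated pair. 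This is the classical characterization; see \cite{bogachev1998} or \cite[Chapter~IV]{vakhania1987}. The law \( \nu = \mu\circ\eta^{-1} \) is automatically Radon, because \( X \) is separable and \( \eta \) is Borel by Lemma~\ref{lem:random:process:restriction}.

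The key steps, in order, are as follows.

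\textbf{Step 1.} Let \( (\Omega\times\Omega, \mu\otimes\mu) \) carry two independent copies \( \eta^{(1)}(\omega,\omega') = \eta(\omega) \) and \( \eta^{(2)}(\omega,\omega') = \eta(\omega') \). Define the rotated pair \( U = (\eta^{(1)}+\eta^{(2)})/\sqrt2 \) and \( V = (\eta^{(2)}-\eta^{(1)})/\sqrt2 \). Both are Borel \( X\times X \)-valued maps, since \( X \) is separable.

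\textbf{Step 2.} Compare the finite-dimensional distributions. For every finite \( T_0\subseteq T \), the vector \( (\eta^{(1)}_t,\eta^{(2)}_t)_{t\in T_0} \) agrees almost surely with \( (\xi_t(\omega),\xi_t(\omega'))_{t\in T_0} \). The latter is a centered Gaussian vector with block-diagonal covariance \( K\oplus K \). This covariance is invariant under the rotation, so \( (U_t,V_t)_{t\in T_0} \) has the same law as \( (\eta^{(1)}_t,\eta^{(2)}_t)_{t\in T_0} \).

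\textbf{Step 3.} Upgrade this equality of finite-dimensional laws to equality of the laws of \( (U,V) \) and \( (\eta^{(1)},\eta^{(2)}) \) on \( \mathcal{B}(X\times X) \). The functionals \( \{(x,y)\mapsto x(t) : t\in T\}\cup\{(x,y)\mapsto y(t) : t\in T\} \) separate points of the separable Banach space \( X\times X \). Hence, by the same result \cite[Theorem II.1.1]{vakhania1987} used in Lemma~\ref{lem:random:process:restriction}, the cylindrical \( \sigma \)-algebra they generate equals \( \mathcal{B}(X\times X) \). This \( \sigma \)-algebra is generated by the \( \pi \)-system of finite-dimensional cylinders, on which the two laws agree by Step 2, so the laws coincide.

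\textbf{Step 4.} Conclude. Step 3 gives exactly the rotation invariance required by the characterization, so \( \nu \) is a centered Radon Gaussian measure and \( \eta \) is a centered Gaussian random element. Alternatively, and more concretely, rotation invariance implies that for each \( f\in X^* \) the real variable \( f(\eta) \) satisfies \( (Z_1+Z_2)/\sqrt2 \overset{d}{=} Z_1 \) with \( Z_1,Z_2 \) i.i.d. copies of \( f(\eta) \). If \( f(\eta) \) has finite variance, the classical one-dimensional argument (iterate \( n \) times and apply the CLT) shows that it is centered normal.

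The main obstacle is Step 4 in its concrete form, specifically controlling the integrability of \( f(\eta) \). An arbitrary \( f\in X^* \) is not a pointwise limit of combinations of \( \delta_t \), so moments cannot be transferred directly from \( \xi \). I will use the abstract characterization as stated, since it needs no moment assumption; its proof in \cite{bogachev1998} proceeds via the characteristic functional \( \varphi(f) \) and the identity \( \varphi(f)=\varphi(f/\sqrt2)^2 \), together with Fernique-type arguments. If I must instead be self-contained, I will prove the scalar statement directly. From \( \varphi(s)=\varphi(s/\sqrt2)^2 \) for the characteristic function \( \varphi \) of \( Z_1 \), which is real and nonnegative by symmetry, I will deduce \( \varphi(s)=\varphi(s2^{-n/2})^{2^n} \). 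Letting \( n\to\infty \) via a second-order expansion then forces \( \varphi(s)=e^{-cs^2} \). To show finite variance, I will first observe that \( -\log\varphi \) is finite near \( 0 \) and use the classical fact that a symmetric stable-with-index-2 law is Gaussian.
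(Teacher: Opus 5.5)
Your proposal is correct and follows essentially the same route as the paper: you establish rotation invariance for two independent copies of \( \eta \) by checking it on evaluation functionals via \( \eta_t=\xi_t \) almost surely, upgrade it to Borel sets using the separability of \( X \) and the separating family \( \{\delta_t : t\in T\} \), and then invoke the rotation-invariance characterization of centered Gaussian measures from \cite{bogachev1998}. The paper differs only in bookkeeping. It compares the characteristic functionals of \( (\nu\otimes\nu)\circ Q_\varphi^{-1} \) and \( \nu \) at the \( \delta_t \) and appeals to \cite[Theorem~III.2.2]{vakhania1987}, whereas you compare full finite-dimensional distributions of the rotated pair and use a \( \pi \)-system argument. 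Your version makes explicit that joint laws, not just one-dimensional ones, must agree; the paper's computation yields this too once it is applied to linear combinations of the \( \delta_t \).
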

\begin{proof}
    Let \( \nu \) denote the distribution of \( \eta \), and for \( \varphi \in \mathbb{R} \) define
    \begin{equation*}
        Q_\varphi : X \times X \to X, \qquad (x,y) \mapsto \sin{\varphi} \, x + \cos{\varphi} \, y.
    \end{equation*}
    Each \( Q_\varphi \) is a continuous linear operator; in particular, for \( \varphi=\pi k \) with \( k\in\mathbb{Z} \), \( Q_\varphi \) is the canonical projection onto the second coordinate.
    Set \( \nu_\varphi = (\nu \otimes \nu)\circ Q_\varphi^{-1} \).
    Then \( \nu_0=\nu \), and for every $t\in T$ and $\varphi \in \mathbb{R}$, a change of variables gives
    \begin{align*}
        \widehat{\nu}_\varphi(\delta_t)
        &= \int_{X \times X} \exp(i(\delta_t(\sin{\varphi} x + \cos{\varphi} y))) \nu\otimes\nu(d(x,y)) \\
        &= \left(\int_{\Omega} \exp(i\sin{\varphi} \eta_t(\omega))\mu(d\omega) \right)\left(\int_{\Omega} \exp(i\cos{\varphi} \eta_t(\omega))\mu(d\omega) \right) \\
        &= \left(\int_{\Omega} \exp(i\sin{\varphi} \xi_t(\omega))\mu(d\omega) \right)\left(\int_{\Omega} \exp(i\cos{\varphi} \xi_t(\omega))\mu(d\omega) \right)
    \end{align*}
    where the last equality uses \( \eta_t = \xi_t \) almost surely for all \( t \in T \).
    Since \( \xi \) is a centered Gaussian process with covariance function \( K \), it follows that
    \begin{equation*}
        \widehat{\nu}_\varphi(\delta_t)
        = \exp{\left(-\frac{1}{2}\left(\sin^2{\varphi}+\cos^2{\varphi}\right)K(t,t)\right)}
        = \widehat{\nu}(\delta_t),
        \qquad \forall t \in T, \, \forall \varphi\in\mathbb{R}.
    \end{equation*}
    Because \( X \) is separable and \( \Gamma=\{\delta_t:t\in T\} \) is separating, \cite[Theorem~III.2.2]{vakhania1987} implies that \( \nu_\varphi=\nu \) on \( (X,\mathcal{B}(X)) \) for all \( \varphi\in\mathbb{R} \).
    Hence \( \nu \) is rotation invariant and therefore a centered Gaussian measure; see \cite[Proposition~2.2.10]{bogachev1998}.
\end{proof}

\begin{remark} \label{rem:Driscoll:restriction}
    Separability of \( X \) can be replaced by (quasi-)reflexivity, using the weak-\( * \) sequential density of  \( \Gamma = \{ \delta_t : t \in T \} \) in \( X^* \); see \cite[Theorem~2.3]{ostrovskii2001}.
\end{remark}

Depending on the probability space \( (\Omega,\mathcal{A},\mu) \), the assumption that \( X \) has full inner measure in Lemma~\ref{lem:random:process:restriction} and Theorem~\ref{thm:GP:restriction} can be rather restrictive, as shown by Lemma~\ref{lem:rkbs:nonmeasurability}.
Although a random process may fail to satisfy this assumption, it may nevertheless admit a modification that does.
This was proved in the Hilbert space setting in \cite[Theorem~7.2]{lukic2001}.
In preparation for extending this result to Banach spaces, we establish the following lemma.

\begin{lemma} \label{lem:rkbs:measurability}
    If \( T \) is countable, then \( \mathcal{B}(X) \subseteq \mathcal{C}(\mathbb{R}^T) \).
\end{lemma}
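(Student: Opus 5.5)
The inclusion \( \mathcal{B}(X) \subseteq \mathcal{C}(\mathbb{R}^T) \) has to be read correctly before it can be proved. Here \( X \subseteq \mathbb{R}^T \), and the natural claim is that every Borel subset of \( X \) belongs to \( \mathcal{C}(\mathbb{R}^T) \) when regarded as a subset of \( \mathbb{R}^T \). The plan is to show two things: first, that \( X \) itself is a member of \( \mathcal{C}(\mathbb{R}^T) \); second, that the trace \( \sigma \)-algebra \( \{ C \cap X : C \in \mathcal{C}(\mathbb{R}^T) \} \) coincides with \( \mathcal{B}(X) \). Together these give \( \mathcal{B}(X) \subseteq \mathcal{C}(\mathbb{R}^T) \).

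For the trace statement, I will use that \( T \) is countable, so \( \Gamma = \{ \delta_t : t \in T \} \subseteq X^* \) is countable and separates points of \( X \). Since \( X \) is a separable Banach space, the remark in Section~\ref{sec:preliminaries} (or \cite[Theorem II.1.1]{vakhania1987}, already used in Lemma~\ref{lem:random:process:restriction}) gives \( \mathcal{C}(X,\Gamma) = \mathcal{B}(X) \). The \( \sigma \)-algebra \( \mathcal{C}(X,\Gamma) \) is generated by the sets \( \{x \in X : x(t) \in B\} = X \cap \delta_t^{-1}(B) \). A standard trace-\( \sigma \)-algebra argument then shows that \( \mathcal{C}(X,\Gamma) \) equals the trace of \( \mathcal{C}(\mathbb{R}^T) \) on \( X \).

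The main obstacle is showing \( X \in \mathcal{C}(\mathbb{R}^T) \). Because \( T \) is countable, \( \mathbb{R}^T \) is a Polish space and \( \mathcal{C}(\mathbb{R}^T) = \mathcal{B}(\mathbb{R}^T) \). The restriction map \( \iota : X \to \mathbb{R}^T \) is continuous, because evaluations are continuous, and it is injective. By the Lusin--Souslin theorem, a continuous injective image of a Borel set of a Polish space is Borel. Hence \( \iota(X) = X \) is Borel in \( \mathbb{R}^T \), and more directly every \( \iota(B) \) with \( B \in \mathcal{B}(X) \) is Borel. This argument actually yields the whole lemma at once and makes the trace step redundant.

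If I want a more elementary route that avoids Lusin--Souslin, I will write \( X = \bigcup_n \iota(\overline{B}_n) \), where \( \overline{B}_n \) is the closed ball of radius \( n \) in \( X \). I will then try to show that each \( \iota(\overline{B}_n) \) is closed in \( \mathbb{R}^T \), which would require weak-\( * \)-type compactness. That compactness fails without reflexivity, so I will fall back on Lusin--Souslin, which only needs \( X \) Polish (separable and complete) and \( \mathbb{R}^T \) Polish.
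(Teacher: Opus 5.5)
Your argument is correct and is essentially the paper's proof: for countable \( T \) one has \( \mathcal{C}(\mathbb{R}^T) = \mathcal{B}(\mathbb{R}^T) \), and the inclusion \( X \to \mathbb{R}^T \) is a continuous injection, so the Lusin--Souslin (Kuratowski) theorem shows that every \( B \in \mathcal{B}(X) \) is Borel in \( \mathbb{R}^T \). Your trace-\( \sigma \)-algebra step and the closed-ball route are unnecessary, but your point that \( X \) must be Polish (separable and complete) is exactly the hypothesis Kuratowski's theorem needs, which the paper phrases only as ``separable metric spaces.''
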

\begin{proof}
    Because \( T \) is countable, it follows that \( \mathbb{R}^T \) is a separable Fréchet space, and thus \( \mathcal{B}(\mathbb{R}^T) = \mathcal{C}(\mathbb{R}^T) \).
    Moreover, the embedding \( \iota : X \to \mathbb{R}^T \) is continuous by continuity of point evaluations in \( X \).
    Hence \( \iota \) is an injective Borel map between separable metric spaces, and by the Kuratowski theorem \cite[Theorem~I.1.1]{vakhania1987} it follows that \( \iota(B) = B \in \mathcal{B}(\mathbb{R}^T) \) for all \( B \in \mathcal{B}(X) \).
\end{proof}

For the proof of the Banach space analogue of \cite[Theorem~7.2]{lukic2001}, we follow the original approach of \cite{driscoll1973}, replacing the Prokhorov theorem \cite[Theorem~IV.2.4]{vakhania1987}, which characterizes covariance operators of Gaussian measures on Hilbert spaces, with the corresponding Banach space version in Theorem~\ref{thm:gaussian:covariance}.

\begin{theorem} \label{thm:Driscoll:one}
    Let \( \xi : T \times \Omega \to \mathbb{R} \) be a centered Gaussian process with covariance function \( K \).
    Then the following are equivalent:
    \begin{enumerate}
        \item \label{thm:Driscoll:one:modification}
        There exists a centered Gaussian random element \( \eta : \Omega \to X \) such that \( \xi_t = \eta_t \) almost surely for all \( t \in T \).

        \item \label{thm:Driscoll:one:radonifying}
        \( H(K) \subseteq X \) and the embedding \( \iota : H(K) \to X \) is \( \gamma \)-radonifying.
    \end{enumerate}
\end{theorem}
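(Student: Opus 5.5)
(1)\(\Rightarrow\)(2) is short, and (2)\(\Rightarrow\)(1) requires building a modification of \( \xi \) with values in \( X \).

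\emph{(1)\(\Rightarrow\)(2).} Let \( \gamma = \mu \circ \eta^{-1} \). Since \( X \) is separable, \( \mathcal{C}(X) = \mathcal{B}(X) \), and Ulam's theorem shows that \( \gamma \) is a centered Radon Gaussian measure on \( (X,\mathcal{B}(X)) \). Because \( \eta_t = \xi_t \) almost surely, its covariance function is
\[
\langle R\delta_s, \delta_t \rangle = \int_\Omega \eta_s \eta_t \, d\mu = \int_\Omega \xi_s \xi_t \, d\mu = K(s,t).
\]
Theorem~\ref{thm:covariance:function} then gives \( H(K) \subseteq X \) with \( \iota \) \( \gamma \)-radonifying.

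\emph{(2)\(\Rightarrow\)(1).} By Theorem~\ref{thm:covariance:function} there is a centered Radon Gaussian measure \( \gamma \) on \( X \) with covariance function \( K \). By Lemma~\ref{lem:determining:set}, fix a countable determining set \( S \subseteq T \) for \( X \). I compare two measures on the countable product \( \mathbb{R}^S \):
\begin{itemize}
\item the law \( \mu^{\xi|_S} \) of \( (\xi_s)_{s\in S} \), defined on \( \mathcal{C}(\mathbb{R}^S) = \mathcal{B}(\mathbb{R}^S) \);
\item the pushforward \( \gamma \circ \pi_S^{-1} \).
\end{itemize}
Both are centered Gaussian with covariance \( K|_{S\times S} \), so they coincide by uniqueness of characteristic functionals. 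The restriction map \( \pi_S : X \to \mathbb{R}^S \) is continuous and injective. Hence, by Lemma~\ref{lem:rkbs:measurability} (or Kuratowski's theorem), \( Y = \pi_S(X) \in \mathcal{B}(\mathbb{R}^S) \). Since \( \gamma(\pi_S^{-1}(Y)) = \gamma(X) = 1 \), it follows that \( \Omega_0 = \{\omega : \xi|_S(\cdot,\omega) \in Y\} \in \mathcal{A} \) and \( \mu(\Omega_0) = 1 \).

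On \( \Omega_0 \) define \( \eta(\omega) = \pi_S^{-1}(\xi|_S(\cdot,\omega)) \in X \), and set \( \eta = 0 \) off \( \Omega_0 \). The map \( \pi_S^{-1} : Y \to X \) is Borel, again by Kuratowski's theorem, because \( \pi_S \) maps Borel sets to Borel sets. Therefore \( \eta \) is Borel measurable, and its law equals \( \gamma \) because \( \xi|_S \) has law \( \gamma\circ\pi_S^{-1} \). In particular \( \eta \) is a centered Gaussian random element.

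\emph{Main obstacle: showing \( \eta_t = \xi_t \) a.s. for \( t \notin S \).} The construction only guarantees agreement on \( S \). My plan is to apply the argument above with \( S' = S \cup \{t\} \), which is still countable and determining. The joint law of \( (\xi_s)_{s\in S'} \) equals \( \gamma\circ\pi_{S'}^{-1} \), since both are centered Gaussian with covariance \( K|_{S'\times S'} \). Under \( \gamma \), the \( t \)-coordinate is a Borel function of the \( S \)-coordinates, namely \( y \mapsto (\pi_S^{-1}y)(t) \) on \( Y \). Since the joint laws agree, the set \( \{ (y,r) : y \in Y,\ r = (\pi_S^{-1}y)(t) \} \) is Borel in \( \mathbb{R}^{S'} \), and it has full \( \gamma\circ\pi_{S'}^{-1} \)-measure. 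It therefore has full \( \mu^{\xi|_{S'}} \)-measure, which gives \( \xi_t = (\pi_S^{-1}\xi|_S)(t) = \eta_t \) almost surely. This completes (1).
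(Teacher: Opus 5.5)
Your proof is correct. The direction (1)\(\Rightarrow\)(2) matches the paper's, but for (2)\(\Rightarrow\)(1) you take a genuinely different route at two points.

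\textbf{The paper's route.} It enlarges the determining set \(S_0\) by a countable \(d_K\)-dense set \(S_1\). This set exists because \(H(K)\subseteq X\) forces \(H(K)\) to be separable (Lemma~\ref{lem:rkhs:separability}). The paper obtains Gaussianity of the modification from the rotation-invariance argument of Theorem~\ref{thm:GP:restriction}. It then proves \(\xi_t=\eta_t\) off \(S\) by the second-moment estimate \(\|\xi_t-\eta_t\|_{L^2}\le 2d_K(s,t)\) with \(s\in S\).

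\textbf{Your route.} First, you identify the law of \(\eta\) with \(\gamma\) directly, using the Lusin--Souslin (Kuratowski) theorem for the injective continuous map \(\pi_S\). Gaussianity then comes for free, without Theorem~\ref{thm:GP:restriction}. Second, you handle \(t\notin S\) as follows. Under \(\gamma\), the coordinate \(x(t)\) is a Borel function of \(x|_S\). This fact is encoded by a Borel graph in \(\mathbb{R}^{S\cup\{t\}}\), and it transfers to \(\xi\) because the laws on \(\mathbb{R}^{S\cup\{t\}}\) coincide.

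\textbf{What each approach buys.} Your argument needs only a countable determining set. It uses neither the kernel metric nor the separability of \(H(K)\). It uses Gaussianity only through the equality of finite-dimensional distributions with those of \(\gamma\). So it works verbatim for any process whose finite-dimensional laws come from a Borel probability measure on the separable space \(X\). It also spells out what the paper's assertion ``the covariance function of \(\eta\) coincides with \(K\)'' rests on, an assertion needed in the paper's \(L^2\) step. The paper's route, in turn, reuses Theorem~\ref{thm:GP:restriction}, which it genuinely needs for the zero part, where no \(\gamma\) is available. It closes with an elementary estimate in the canonical metric, in the spirit of Luki\'c and Beder.

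\textbf{Minor points.} \(S\cup\{t\}\) only needs to be countable, not determining. The Borel measurability of the graph does not depend on the joint laws agreeing; the agreement is used only to transfer its full measure from \(\gamma\circ\pi_{S'}^{-1}\) to the law of \(\xi|_{S'}\).
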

\begin{proof}
    (\ref{thm:Driscoll:one:modification}) \( \implies \) (\ref{thm:Driscoll:one:radonifying}).
    Let \( \gamma \) denote the distribution of \( \eta \).
    Since \( X \) is separable, \( \gamma \) is a Radon Gaussian measure.
    Hence \( H(K) \subseteq X \), and the embedding \( \iota : H(K) \to X \) is \( \gamma \)-radonifying by Theorem~\ref{thm:covariance:function}.

    \smallskip
    (\ref{thm:Driscoll:one:radonifying}) \( \implies \) (\ref{thm:Driscoll:one:modification}).
    Let \( S_0 \subseteq T \) be a countable determining set for \( X \).
    Since \( H(K) \subseteq X \), the same set \( S_0 \) is determining for \( H(K) \).
    Hence \( H(K) \) is separable, and there exists a countable set \( S_1 \subseteq T \) dense in \( (T,d_K) \).
    Set \( S = S_0 \cup S_1 \) and let \( \pi_S : \mathbb{R}^T \to \mathbb{R}^S \) be the canonical projection; then \( S \) is countable and \( \pi_S \) is continuous and linear.

    Let \( \mu^\xi \) denote the distribution of \( \xi \), and let \( \gamma \) be the centered Gaussian measure on \( (X,\mathcal{C}(X)) \) with covariance function \( K \); such a measure exists by Theorem~\ref{thm:covariance:function}.
    Since \( X \) embeds continuously into \( \mathbb{R}^T \) and centered Gaussian measures are uniquely determined by their covariances, it follows that
    \begin{equation*}
        \gamma(C \cap X) = \mu^\xi(C),
        \qquad \forall C \in \mathcal{C}(\mathbb{R}^T).
    \end{equation*}
    In particular, this holds for \( C = \pi_S^{-1}(X(S)) \) where \( C \in \mathcal{C}(\mathbb{R}^T) \), because \( \pi_S \) is measurable and \( X(S) \in \mathcal{C}(\mathbb{R}^S) \) by Lemma~\ref{lem:rkbs:measurability}.
    Thus
    \begin{equation*}
        \gamma(X) = \gamma(\pi_S^{-1}(X(S)) \cap X) = \mu^\xi(\pi_S^{-1}(X(S))) = \mu(\{ \omega \in \Omega : \widetilde{\xi}(\cdot,\omega) \in X(S) \}) = 1,
    \end{equation*}
    where \( \widetilde{\xi} : S \times \Omega \to \mathbb{R}, \, (s,\omega) \mapsto \xi(s,\omega) \).
    Hence, by Theorem~\ref{thm:GP:restriction}, there exists a Gaussian random element \( \widetilde{\eta} : \Omega \to X(S) \) such that \( \widetilde{\eta}_s = \widetilde{\xi}_s \) almost surely for all \( s\in S \).
    Because \( S \) is a determining set for \( X \), the spaces \( X \) and \( X(S) \) are isometrically isomorphic.
    Hence there exists a unique Gaussian random element \( \eta : \Omega \to X \) satisfying \( \eta(s,\omega) = \widetilde{\eta}(s,\omega) \) for all \( \omega \in \Omega \) and \( s \in S \).
    The covariance function of \( \eta \) coincides with \( K \).

    It remains to verify that \( \xi_t = \eta_t \) almost surely for all \( t \in T \).
    Clearly this holds for all \( t \in S \).
    For arbitrary \( t \in T \) and \( s\in S \), the triangle inequality gives
    \begin{equation*}
        \| \xi_t - \eta_t \|_{L^2} \leq \| \xi_t - \xi_s \|_{L^2} + \| \xi_s - \eta_s \|_{L^2} + \| \eta_s - \eta_t \|_{L^2}.
    \end{equation*}
    The middle term vanishes because \( \eta_s = \xi_s \) almost surely.
    Moreover, the polarization identity and the fact that both \( \xi \) and \( \eta \) have covariance function \( K \), yields
    \begin{equation*}
        d_K(s,t) = \| \xi_t - \xi_s \|_{L^2} = \| \eta_t - \eta_s \|_{L^2}, \qquad \forall s \in S, \, \forall t \in T.
    \end{equation*}
    Thus
    \begin{equation*}
        \| \xi_t - \eta_t \|_{L^2} \leq 2 d_K(s,t).
    \end{equation*}
    Since \( S \) is dense in \( (T,d_K) \), the right-hand side can be made arbitrarily small.
    Hence \( \| \xi_t - \eta_t \|_{L^2} = 0 \), and thus \( \xi_t = \eta_t \) almost surely for all \( t \in T \).
\end{proof}

The implication \( (\ref{thm:Driscoll:one:radonifying}) \implies (\ref{thm:Driscoll:one:modification}) \) in Theorem~\ref{thm:Driscoll:one} may be viewed as the Banach space analogue of the ``one'' part of the Driscoll theorem.
A similar statement appears in \cite[Theorem~4.3]{hajek1962} in the setting of Hilbert spaces.
It is formulated in a language predating reproducing kernel Hilbert spaces and nuclear dominance, which may explain why it is not cited in \cite{lukic2001}.
The corresponding ``zero'' part is proved similarly to \cite[Theorem~7.3]{lukic2001}, using the Kallianpur zero--one law \cite{kallianpur1970b}.

\begin{theorem} \label{thm:Driscoll:zero}
    Let \( \xi : T \times \Omega \to \mathbb{R} \) be a centered Gaussian process with covariance function \( K \), and suppose that \( (\Omega,\mathcal{A},\mu) \) is complete.
    If \( H(K) \not\subseteq X \) or if the embedding \( \iota : H(K) \to X \) is not \( \gamma \)-radonifying, then
    \begin{equation} \label{eq:Driscoll:zero}
        \mu(\{ \omega \in \Omega : \xi(\cdot,\omega) \in X \}) = 0.
    \end{equation}
\end{theorem}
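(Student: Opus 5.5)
Throughout, set \( E = \{ \omega \in \Omega : \xi(\cdot,\omega) \in X \} \). The plan has three steps: (i) if \( E \) has positive outer measure, then \( X \) has positive measure for a suitable restricted process on a countable set \( S \); (ii) a zero--one law upgrades this positive measure to full measure; (iii) Lemma~\ref{lem:random:process:restriction} and Theorem~\ref{thm:Driscoll:one} then contradict the hypothesis. Completeness of \( \mu \) is what turns ``outer measure zero'' into \( \mu(E) = 0 \).

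\emph{Reduction to a countable index set.} As in the proof of Theorem~\ref{thm:Driscoll:one}, I fix a countable determining set \( S_0 \) for \( X \) (Lemma~\ref{lem:determining:set}). If \( (T,d_K) \) is separable, I add a countable \( d_K \)-dense set \( S_1 \) and put \( S = S_0 \cup S_1 \).

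\emph{Positive measure on \( S \).} By Lemma~\ref{lem:rkbs:measurability} applied to \( X(S) \subseteq \mathbb{R}^S \), the set \( C = \pi_S^{-1}(X(S)) \) lies in \( \mathcal{C}(\mathbb{R}^T) \). Hence \( E_S = \{ \omega : \xi|_S(\cdot,\omega) \in X(S) \} \in \mathcal{A} \), and \( E \subseteq E_S \). It therefore suffices to show \( \mu(E_S) = 0 \), after which completeness gives \( \mu(E) = 0 \). Suppose instead that \( \mu(E_S) > 0 \). The set \( X(S) \) is a linear subspace of \( \mathbb{R}^S \) and is Borel there. Since \( \mu^{\xi|_S} \) is a centered Gaussian measure on the separable Fréchet space \( \mathbb{R}^S \), Kallianpur's zero--one law \cite{kallianpur1970b} (a measurable linear subspace has Gaussian measure \( 0 \) or \( 1 \)) yields \( \mu(E_S) = 1 \).

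\emph{Building a Gaussian element and reaching a contradiction.} Lemma~\ref{lem:random:process:restriction} and Theorem~\ref{thm:GP:restriction}, applied to the separable RKBS \( X(S) \), give a centered Gaussian random element \( \widetilde\eta \) in \( X(S) \) with \( \widetilde\eta_s = \xi_s \) almost surely for \( s \in S \). Pulling back through the isometry \( X \cong X(S) \) gives a centered Gaussian element \( \eta \) in \( X \). Its distribution is Radon because \( X \) is separable, so by Theorem~\ref{thm:covariance:function} its covariance function \( K' \) satisfies \( H(K') \subseteq X \) with \( \gamma \)-radonifying embedding, and \( K' = K \) on \( S \times S \).

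The main obstacle is to show \( K' = K \) on all of \( T \times T \), for then Theorem~\ref{thm:covariance:function} contradicts the hypothesis. I would try the \( L^2 \) triangle-inequality argument from Theorem~\ref{thm:Driscoll:one}, but that argument needs both \( K \) and \( K' \) to be approximable from \( S \). For \( K' \), the space \( H(K') \subseteq X \) is separable, so I would enlarge \( S \) by a \( d_{K'} \)-dense countable set. This is circular, since \( \eta \) depends on \( S \). For \( K \), separability of \( H(K) \) is not yet known.

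To break the circularity I would argue pointwise. Fix \( t \in T \) and apply the entire argument with \( S \cup \{t\} \) in place of \( S \). This still gives positive, hence full, measure, because \( E \subseteq E_{S \cup \{t\}} \). Since \( S_0 \subseteq S \) is determining for \( X \), the element in \( X \) built from \( S \cup \{t\} \) agrees almost surely with \( \eta \) on \( S \), and therefore equals \( \eta \) almost surely. Consequently \( \eta_t = \xi_t \) almost surely for every \( t \in T \). This gives \( K' = K \) directly, with no density argument needed, and this consistency step is where I expect the care to lie. With \( K' = K \), Theorem~\ref{thm:covariance:function} yields \( H(K) \subseteq X \) with \( \gamma \)-radonifying embedding, contradicting the hypothesis. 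Hence \( \mu(E_S) = 0 \), and so \( \mu(E) = 0 \).
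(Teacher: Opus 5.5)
Your skeleton is the paper's: a countable determining set, measurability of \( E_S \) via Lemma~\ref{lem:rkbs:measurability}, Kallianpur's zero--one law on \( \mathbb{R}^S \), Theorem~\ref{thm:GP:restriction}, transport through \( X \cong X(S) \), and Theorem~\ref{thm:covariance:function}. The genuine difference is your consistency step, and it is needed. The paper simply asserts that the element \( \eta \) built from \( S \) alone has covariance function \( K \). A priori this holds only on \( S \times S \), and it can fail.

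Take \( T=[0,1] \), \( X=C[0,1] \) and \( S=\mathbb{Q}\cap[0,1] \). Let \( \xi_s=B_s \) (Brownian motion) for rational \( s \), and \( \xi_t=Z_t \) for irrational \( t \), with \( (Z_t) \) i.i.d.\ standard normal and independent of \( B \). Then \( \mu(E_S)=1 \), and \( \eta=B \) has covariance \( \min(s,t) \). On the other hand, \( K(\cdot,t) \) is the indicator function of \( \{t\} \) for irrational \( t \), so \( H(K)\not\subseteq X \). Hence the paper's intermediate conclusion \( \mu(\Omega_0)<1 \) (its \( \Omega_0 \) is your \( E_S \)) is false here.

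Your observation repairs exactly this. The element built from \( S\cup\{t\} \) agrees a.s.\ with \( \eta \) on the countable determining set \( S \), hence equals \( \eta \) a.s., so \( \eta_t=\xi_t \) a.s. This also makes the \( d_K \)-dense set \( S_1 \) unnecessary.

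The same example shows that your bookkeeping has to change. The reduction ``it suffices to show \( \mu(E_S)=0 \)'' and the closing ``hence \( \mu(E_S)=0 \)'' cannot be right, since \( \mu(E_S)=1 \) there while the hypothesis holds. Moreover, the assumption \( \mu(E_S)>0 \) does not feed your \( S\cup\{t\} \) step. Since \( E_{S\cup\{t\}}\subseteq E_S \), positivity does not pass from \( E_S \) to \( E_{S\cup\{t\}} \). Your justification \( E\subseteq E_{S\cup\{t\}} \) really uses positive outer measure of \( E \), which is what your step (i) announced.

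So organize it as follows:
\begin{enumerate}
\item Suppose \( \mu(E_{S_0\cup\{t\}})>0 \) for every \( t\in T \). This holds in particular if \( \mu^*(E)>0 \), since \( E\subseteq E_{S_0\cup\{t\}} \).
\item Kallianpur's law upgrades each of these measures to \( 1 \), and then \( \mu(E_{S_0})=1 \) as well.
\item Build \( \eta \) from \( S_0 \), and use \( S_0\cup\{t\} \) to obtain \( \eta_t=\xi_t \) a.s.\ for every \( t \).
\item Contradict the hypothesis via Theorem~\ref{thm:Driscoll:one}.
\end{enumerate}
Hence \( \mu(E_{S_0\cup\{t\}})=0 \) for some \( t \). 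As \( E\subseteq E_{S_0\cup\{t\}} \) and \( \mu \) is complete, \( \mu(E)=0 \). With this correction your proof is complete, and it closes a hole that the paper's own proof leaves open.
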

\begin{proof}
    Let \( S \subseteq T \) be a countable determining set for \( X \), and let \( \widetilde{\xi} : S \times \Omega \to \mathbb{R} \) be the restriction \( \widetilde{\xi}(s,\omega) = \xi(s,\omega) \).
    Then \( \widetilde{\xi} \) is a centered Gaussian process, and
    \begin{equation*}
        \{ \omega \in \Omega : \xi(\cdot,\omega) \in X \} \subseteq \{ \omega \in \Omega : \widetilde{\xi}(\cdot,\omega) \in X(S) \}.
    \end{equation*}
    Define \( \Omega_0 = \{ \omega \in \Omega : \widetilde{\xi}(\cdot,\omega) \in X(S) \} \).
    Since \( X(S) \in \mathcal{C}(\mathbb{R}^S) \) by Lemma~\ref{lem:rkbs:measurability}, it follows that \( \Omega_0 \in \mathcal{A} \).
    Hence it suffices to show that \( \mu(\Omega_0) = 0 \).

    Suppose, for a contradiction, that \( \mu(\Omega_0) = 1 \).
    By Theorem~\ref{thm:GP:restriction}, there then exists a centered Gaussian random element \( \widetilde{\eta} : \Omega \to X(S) \) satisfying \( \widetilde{\eta}_s = \widetilde{\xi}_s \) almost surely for all \( s \in S \).
    Since \( S \) is a determining set for \( X \), the spaces \( X \) and \( X(S) \) are isometrically isomorphic.
    Hence there exists a unique Gaussian random element \( \eta : \Omega \to X \) such that \( \eta(s,\omega) = \widetilde{\eta}(s,\omega) \) for all \( s \in S \) and \( \omega \in \Omega \).
    Its distribution \( \mu^\eta \) is a Radon Gaussian measure on \( X \) with covariance function \( K \).
    From Theorem~\ref{thm:covariance:function} it follows that the embedding \( \iota : H(K) \to X \) is \( \gamma \)-radonifying, which contradicts the hypothesis.
    Therefore \( \mu(\Omega_0) < 1 \).

    Because \( X(S) \) is a measurable subspace of \( \mathbb{R}^S \), the Kallianpur zero--one law \cite{kallianpur1970b} implies that either \( \mu(\Omega_0)=0 \) or \( \mu(\Omega_0)=1 \).
    By the previous paragraph \( \mu(\Omega_0)<1 \), and therefore \( \mu(\Omega_0)=0 \).
    Since \( (\Omega,\mathcal{A},\mu) \) is complete, this yields \eqref{eq:Driscoll:zero}.
\end{proof}

\begin{remark}
    Separability of \( X \) can be replaced by reflexivity.
    Indeed, if \( X \) is a reflexive Banach space, then every probability measure on \( (X,\mathcal{C}(X)) \) admits a unique Radon extension; see \cite[Theorem~I.5.3]{vakhania1987}.
\end{remark}

The identity operator is \( \gamma \)-radonifying, and hence compact, precisely in finite-dimensional Hilbert spaces.
As an immediate consequence, we obtain the following classical result, originally stated by Parzen \cite[eq.~34]{parzen1963} and subsequently proved by Kallianpur~\cite[Theorem~5.1]{kallianpur1970a} and Le Page~\cite{lepage1973}.

\begin{corollary}[Parzen] \label{cor:Parzen}
    Let \( \xi : T \times \Omega \to \mathbb{R} \) be a centered Gaussian process with covariance function \( K \).
    If \( H(K) \) is infinite-dimensional then
    \begin{equation} \label{eq:Parzen}
        \mu(\{ \omega \in \Omega : \xi(\cdot,\omega) \in H(K) \}) = 0.
    \end{equation}
\end{corollary}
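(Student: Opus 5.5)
We apply Theorem~\ref{thm:Driscoll:zero} with \( X = H(K) \). The one point that needs care is that Theorem~\ref{thm:Driscoll:zero} assumes throughout this section that \( X \) is a \emph{separable} reproducing kernel Banach space, so we cannot simply take \( X = H(K) \) without first dealing with separability. We therefore split into two cases according to whether \( H(K) \) is separable.

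\emph{Case 1: \( H(K) \) is separable.} Then \( X = H(K) \) is a separable reproducing kernel Hilbert space, hence a separable reproducing kernel Banach space, and trivially \( H(K) \subseteq X \). The embedding \( \iota \) is the identity operator on \( H(K) \). We show it is not \( \gamma \)-radonifying. Every \( \gamma \)-radonifying operator is compact, as noted after the definition of \( \gamma(H,X) \), and the identity on an infinite-dimensional Hilbert space is not compact. Alternatively, in the Hilbert setting \( \gamma \)-radonifying is the same as Hilbert--Schmidt, and \( \sum_{k=1}^n \|h_k\|^2 = n \) is unbounded over finite orthonormal systems. Theorem~\ref{thm:Driscoll:zero} then gives \eqref{eq:Parzen}, provided \( (\Omega,\mathcal{A},\mu) \) is complete, as assumed in Theorem~\ref{thm:Driscoll}. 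If completeness is not assumed, the statement should be read as saying the set is contained in a null set \( \Omega_0 \in \mathcal{A} \).

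\emph{Case 2: \( H(K) \) is non-separable.} This case does not fit the section's standing assumption. By Lemma~\ref{lem:rkhs:separability}, \( H(K) \) then has no countable determining set. I would argue directly as follows. Fix any countable \( S \subseteq T \) and let \( \Omega_S = \{\omega : \xi(\cdot,\omega)|_S \in H(K)(S)\} \). Suppose \( \omega \) has \( \xi(\cdot,\omega) = x \in H(K) \). Since \( x \) lies in the separable closed span of countably many kernel sections, it is determined by its values on a countable set; however, which countable set this is depends on \( \omega \), so this alone does not finish the argument. A cleaner route is to pick a separable closed subspace \( H_0 = H(K_0) \) with \( K_0(s,t) = K(s,t) \) for \( s,t \in S \), where \( S \) is countable and \( H(K|_{S\times S}) \) is infinite-dimensional. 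Such an \( S \) exists since \( H(K) \) is infinite-dimensional: choose \( S \) so that \( \{K(\cdot,s)\}_{s\in S} \) contains infinitely many linearly independent elements. Apply Case 1 to the restricted process \( \widetilde\xi \) on \( S \), whose RKHS \( H(K|_{S\times S}) = H(K)(S) \) is separable and infinite-dimensional. This gives \( \mu(\widetilde\xi \in H(K)(S)) = 0 \). Restriction maps \( H(K) \) into \( H(K)(S) \), so \( \{\xi \in H(K)\} \subseteq \{\widetilde\xi \in H(K)(S)\} \), and completeness yields \eqref{eq:Parzen}.

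This reduction to a countable \( S \) in fact handles both cases uniformly, so I would present the proof that way: choose a countable \( S \) with \( H(K|_{S\times S}) \) infinite-dimensional, which is automatically separable by Lemma~\ref{lem:rkhs:separability} since \( S \) is determining for it. Then apply Theorem~\ref{thm:Driscoll:zero} to \( \widetilde\xi \) with \( X = H(K|_{S\times S}) \), and finally use the inclusion of events together with completeness. The main thing to check is the identification \( H(K)(S) = H(K|_{S\times S}) \), the standard Aronszajn restriction theorem. It can be verified using Theorem~\ref{thm:restriction}: the restriction norm is the quotient by the annihilator of \( \{K(\cdot,s)\}_{s\in S} \), so it is isometric to the closed span of those sections, which has reproducing kernel \( K|_{S\times S} \).
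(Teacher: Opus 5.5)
Your argument is correct, and it is more careful than the paper's. Your Case~1 is exactly the paper's proof: the paper applies Theorem~\ref{thm:Driscoll:zero} with \( X = H(K) \) and observes that the identity on an infinite-dimensional Hilbert space is not compact, hence not \( \gamma \)-radonifying.

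What you add is the observation that this one-liner tacitly needs two things the corollary does not assume:
\begin{enumerate}
    \item \( H(K) \) must be separable, which is the standing assumption of Section~\ref{sec:Driscoll:theorem}. A white-noise kernel, \( K(s,t) = 1 \) for \( s = t \) and \( K(s,t) = 0 \) otherwise, on an uncountable \( T \) gives a non-separable \( H(K) \), and the paper's derivation does not literally cover it.
    \item \( (\Omega,\mathcal{A},\mu) \) must be complete.
\end{enumerate}

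Your uniform reduction removes the separability restriction. You pass to a countable \( S \) with \( H(K|_{S\times S}) \) infinite-dimensional, apply Theorem~\ref{thm:Driscoll:zero} to \( \widetilde\xi \) on the separable space \( X = H(K|_{S\times S}) \), and pull back via \( \{\xi \in H(K)\} \subseteq \{\widetilde\xi \in H(K)(S)\} \). The cost is the Aronszajn restriction identity, which you derive correctly from Theorem~\ref{thm:restriction}: the quotient by \( \{K(\cdot,s)\}_{s\in S}^{\perp} \) is isometric to the closed span of the sections, and that span has the Moore--Aronszajn properties for \( K|_{S\times S} \). Only the inclusion \( H(K)(S) \subseteq H(K|_{S\times S}) \) is actually used.

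One step deserves an explicit line: choosing infinitely many linearly independent sections \( K(\cdot,s_n) \) gives an infinite-dimensional \( H(K|_{S\times S}) \) because their restrictions to \( S \) stay independent. Indeed, if \( \sum_n c_n K(s,s_n) = 0 \) for all \( s \in S \), then \( \|\sum_n c_n K(\cdot,s_n)\|^2 = \sum_{m,n} c_m c_n K(s_m,s_n) = 0 \), so all \( c_n = 0 \).

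Also note that, since \( S \) is countable, the event \( \{\widetilde\xi \in H(K|_{S\times S})\} \) is already measurable by Lemma~\ref{lem:rkbs:measurability}. Completeness therefore enters only in the final step, as you say.

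In short, the paper's route is immediate but valid only for separable \( H(K) \). Yours proves the corollary as stated, and shows that the zero part only ever needs to be invoked on a countable index set, where the measurability issues disappear.
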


\providecommand{\bysame}{\leavevmode\hbox to3em{\hrulefill}\thinspace}
\providecommand{\MR}{\relax\ifhmode\unskip\space\fi MR }
\providecommand{\MRhref}[2]{%
  \href{http://www.ams.org/mathscinet-getitem?mr=#1}{#2}
}
\providecommand{\href}[2]{#2}


\begin{thebibliography}{10}

\bibitem{aronszajn1950}
N. Aronszajn, \emph{Theory of reproducing kernels}, Trans. Amer. Math. Soc. \textbf{68} (1950), 337--404. \MR{51437}

\bibitem{banach1987}
S. Banach, \emph{Theory of linear operations}, North-Holland Mathematical Library, vol.~38, North-Holland Publishing Co., Amsterdam, 1987, Translated from the French by F. Jellett, With comments by A. Pe\l czy\'nski and Cz.\ Bessaga. \MR{880204}

\bibitem{bartolucci2023}
F. Bartolucci, E. De~Vito, L. Rosasco, and S. Vigogna, \emph{Understanding neural networks with reproducing kernel {B}anach spaces}, Appl. Comput. Harmon. Anal. \textbf{62} (2023), 194--236. \MR{4484791}

\bibitem{bogachev1998}
V.~I. Bogachev, \emph{Gaussian measures}, Mathematical Surveys and Monographs, vol.~62, American Mathematical Society, Providence, RI, 1998. \MR{1642391}

\bibitem{brzezniak1996}
Z. Brze\'zniak, \emph{On {S}obolev and {B}esov spaces regularity of {B}rownian paths}, Stochastics Stochastics Rep. \textbf{56} (1996), no.~1-2, 1--15. \MR{1396751}

\bibitem{diestel1977}
J. Diestel and J.~J. Uhl, Jr., \emph{Vector measures}, Mathematical Surveys, vol. No. 15, American Mathematical Society, Providence, RI, 1977, With a foreword by B. J. Pettis. \MR{453964}

\bibitem{driscoll1973}
M.~F. Driscoll, \emph{The reproducing kernel {H}ilbert space structure of the sample paths of a {G}aussian process}, Z. Wahrscheinlichkeitstheorie und Verw. Gebiete \textbf{26} (1973), 309--316. \MR{370723}

\bibitem{fortet1973}
R. Fortet, \emph{Espaces \`a{} noyau reproduisant et lois de probabilit\'es des fonctions al\'eatoires}, Ann. Inst. H. Poincar\'e{} Sect. B (N.S.) \textbf{9} (1973), 41--58. \MR{345193}

\bibitem{fortet1974}
\bysame, \emph{Espaces \`a{} noyau reproduisant et lois de probabilit\'e{} de fonctions al\'eatoires}, C. R. Acad. Sci. Paris S\'er. A \textbf{278} (1974), 1439--1440. \MR{370724}

\bibitem{hajek1962}
J. H\'ajek, \emph{On linear statistical problems in stochastic processes}, Czechoslovak Math. J. \textbf{12(87)} (1962), 404--444. \MR{152090}

\bibitem{hytonen2017}
T. Hyt\"onen, J. van Neerven, M. Veraar, and L. Weis, \emph{Analysis in {B}anach spaces. {V}ol. {II}. {P}robabilistic methods and operator theory}, Ergebnisse der Mathematik und ihrer Grenzgebiete. 3. Folge. A Series of Modern Surveys in Mathematics [Results in Mathematics and Related Areas. 3rd Series. A Series of Modern Surveys in Mathematics], vol.~67, Springer, Cham, 2017. \MR{3752640}

\bibitem{kallianpur1970a}
G.~Kallianpur, \emph{The role of reproducing kernel {H}ilbert spaces in the study of {G}aussian processes}, Advances in {P}robability and {R}elated {T}opics, {V}ol. 2, Dekker, New York, 1970, pp.~49--83. \MR{283866}

\bibitem{kallianpur1970b}
\bysame, \emph{Zero-one laws for {G}aussian processes}, Trans. Amer. Math. Soc. \textbf{149} (1970), 199--211. \MR{266293}

\bibitem{lepage1973}
R. Le~Page, \emph{Subgroups of paths and reproducing kernels}, Ann. Probability \textbf{1} (1973), 345--347. \MR{350835}

\bibitem{lukic2001}
M.~N. Luki\'c and J.~H. Beder, \emph{Stochastic processes with sample paths in reproducing kernel {H}ilbert spaces}, Trans. Amer. Math. Soc. \textbf{353} (2001), no.~10, 3945--3969. \MR{1837215}

\bibitem{ostrovskii2001}
M.~I. Ostrovskii, \emph{Weak{$^*$} sequential closures in {B}anach space theory and their applications}, General topology in {B}anach spaces, Nova Sci. Publ., Huntington, NY, 2001, pp.~21--34. \MR{1901532}

\bibitem{parzen1963}
E. Parzen, \emph{Probability density functionals and reproducing kernel {H}ilbert spaces}, Proc. {S}ympos. {T}ime {S}eries {A}nalysis ({B}rown {U}niv., 1962), Wiley, New York-London, 1963, pp.~155--169. \MR{149634}

\bibitem{pettis1938}
B.~J. Pettis, \emph{On integration in vector spaces}, Trans. Amer. Math. Soc. \textbf{44} (1938), no.~2, 277--304. \MR{1501970}

\bibitem{rudin1991}
W. Rudin, \emph{Functional analysis}, second ed., International Series in Pure and Applied Mathematics, McGraw-Hill, Inc., New York, 1991. \MR{1157815}

\bibitem{vakhania1987}
N.~N. Vakhania, V.~I. Tarieladze, and S.~A. Chobanyan, \emph{Probability distributions on {B}anach spaces}, Mathematics and its Applications (Soviet Series), vol.~14, D. Reidel Publishing Co., Dordrecht, 1987, Translated from the Russian and with a preface by Wojbor A. Woyczynski. \MR{1435288}

\bibitem{neerven2010}
J. van~Neerven, \emph{{$\gamma$}-radonifying operators---a survey}, The {AMSI}-{ANU} {W}orkshop on {S}pectral {T}heory and {H}armonic {A}nalysis, Proc. Centre Math. Appl. Austral. Nat. Univ., vol.~44, Austral. Nat. Univ., Canberra, 2010, pp.~1--61. \MR{2655391}

\end{thebibliography}
\end{document}